\documentclass[a4paper,11pt]{article}

\title{On the Brill-Noether loci of a curve embedded in a $\Kt$ surface}
\author{Luigi Pagano}


\usepackage{CommandsAndStyle}

\usepackage{titlesec}
\titleformat{\subsubsection}[runin]{\bfseries}{\thesubsubsection}{1em}{}[]

\begin{document}

\maketitle

\begin{abstract}
We extend a previous result of Feyzbakhsh concerning the injectivity of a map of moduli spaces and we use this result to construct curves whose Brill-Noether loci have unexpected dimension.

MSC: 14H60, 14J28, 14H51.

Keywords: Bridgeland stability, Mukai program, Brill-Noether locus.
\end{abstract}

\section{Introduction}


Let $\mathcal{P}_g$ be the moduli space of triples $(X,H,C)$ where $X$ is a projective $K3$ surface with polarization $H$ and $C$ is a smooth curve of genus $g$ in the linear system $|H|$ and let $\mathcal{M}_g$ be the moduli space of curves of genus $g$. Consider the forgetful functor $\mathcal{P}_g\rightarrow \mathcal{M}_g$, $(X, H, C)\mapsto C$. If $g\geq 11$ and $g\neq 12$, the functor is birational onto its image. In \cite{Muk96} Mukai started a program consisting in finding the rational inverse of such map, i.e. in reconstructing the $\Kt$ surface $X$ starting from the curve $C$.

Mukai's program has been dealt with in \cite{ABS14, Fey17, Fey20, Muk96}; it has been proved that when $X$ is a general polarized $\Kt$ surface of genus $g\geq 11$, if the curve $C$ belongs to the linear system of a primitive ample line bundle $H$, then $X$ is the unique $\Kt$ surface containing $C$.

All the four proofs of the results carried, as a side product, the knowledge of the structure of a specific Brill-Noether locus of the curve $C$, more precisely it has been proven that the Brill-Noether locus $T_g\coloneqq B_C^{s+2}(2,4s)$, i.e. the space parametrizing vector bundles of rank 2, degree 4s and having at least $s+2$ linearly independent global sections, if $g=2s+1$, or the locus $T_g\coloneqq B_C^{p+4}(4,4p)$ if $g=p+1$ and $p$ is odd, are smooth $\Kt$ surfaces. For $g\neq 11$, the virtual dimension of such Brill-Noether loci is negative.


Let $v\coloneqq (2, H, s)$ if $g=2s+1$ or $v\coloneqq (4, 2H, p)$ if $g=p+1$ and $p$ is odd.
Let $N_g\coloneqq M_{X, H}(v)$ be the moduli space of stable sheaves on $X$ with Mukai vector $v$.
In this paper we are going to study the morphism from the moduli space $N_g$ to $T_g$ induced by the restriction map sending a sheaf on $X$ to its restriction on $C$. We are going to study under what conditions such restriction map is injective, obtaining the following results:

\begin{theorem}
\label{thm:BN1}
Let $X$ be a $\Kt$ surface which is not hyperelliptic.
Let $C\hookrightarrow X$ be an hyperplane section of genus $g\equiv 1 \pmod 4$.
Then the map $N_g\rightarrow T_g$ sending a vector bundle $E$ on $X$ with Mukai vector $v$ to its restriction on $C$ is injective.
\end{theorem}

\begin{theorem}
\label{thm:BN23}
Let $C\hookrightarrow X$ be an hyperplane section of genus $g\equiv 2,3 \pmod 4$.
Then the map $N_g\rightarrow T_g$ sending a vector bundle $E$ on $X$ with Mukai vector $v$ to its restriction on $C$ is injective.
\end{theorem}

We are going to prove Theorem \ref{thm:BN1} in \S \ref{sect:cong1}, while the proof of Theorem \ref{thm:BN23} is split in \S \ref{sect:cong3} and \S \ref{sect:cong2} according to the case.

After that, we are going to consider a curve contained in a (possibly non unique) $\Kt$ surface and repeat the construction of such BN loci, showing that when the curve is contained a family of different $\Kt$ surfaces, its BN locus must have dimension greater than two. In Example \ref{ex:curvehBN}, by giving a codimension two linear section of a Fano threefold, we construct a curve whose BN locus has dimension at least three. By adapting this construction to different cases, it is possible to get several examples of curves whose BN loci have actual dimension higher than the expected one.



\section{Stability conditions}

\subsubsection{} In this section, we summarize the main properties of the stability notion introduced by Bridgeland in \cite{Bri07, Bri08}. For a $\Kt$ surface $X$ let us denote by $\Stab(X)$ the set of locally finite stbility conditions on $\mathcal{D}^\flat(\Coh(X))$ and by $\Stab^\dagger(X)$ the connected component of $\Stab(X)$ containing all the stability conditions constructed by Bridgeland in \cite{Bri08}.

\subsubsection{} Let $X$ be a projective $\Kt$ surface and let $\beta, \omega\in \NS_{\R}(X)\coloneqq \NS(X)\otimes \R$, with $\omega\in \Amp(X)$. Following the construction of \cite[\S 6]{Bri08}, we can associate a stability condition $\sigma_{\beta,\omega}$ to the two divisors.
Let $\mathcal{T}\subseteq \Coh(X)$ be the set of sheaves whose torsion-free parts have $\omega-$slope semistable Harder-Narasimhan factors of slope $\mu_\omega> \beta\cdot \omega$ and let $\mathcal{F}\subseteq \Coh(X)$ be the set of torsion-free sheaves whose Harder-Narasimhan factors have slope $\mu_\omega\leq \beta\cdot \omega$.
Then $(\mathcal{T,F})$ is a torsion pair, \cite[Lemma 6.1]{Bri08}, and tilting with respect to this pair gives a bounded t-structure whose heart is
$$\mathcal{A}(\beta, \omega)\coloneqq \{ E\in \mathcal{D}(X) \colon H^i(E)=0 \mbox{ for } i \neq -1,0, H^{-1}(E)\in \mathcal{F} \mbox{ and }  H^0(E)\in \mathcal{T} \} \punto $$
The stability condition $\sigma_{\beta, \omega}$ has the above heart and central charge given by:
$$ Z_{\beta, \omega}(E)= \beta\cdot\Delta - s - r \frac{\beta^2-\omega^2}2 + i (\Delta-r\beta)\cdot \omega  \virgola $$
for $E\in \mathcal{D}^\flat(X)$ with Mukai vector $v(E)=(r,\Delta,s)$. This defines a stability condition provided that for all spherical objects $E$, one has $Z_{\beta, \omega}(E)\notin \R_{\leq0}$; in particular this defines a stability condition whenever $\omega^2>2$, \cite[Lemma 6.2]{Bri08}.

\subsubsection{} If $H\in X$ is a fixed ample line bundle, we consider a special subset of $\Stab^\dagger(X)$ parametrized by the upper half plane: for $a>0, b \in \R$ we denote by $\sigma_{b,a}$ the stablity condition $\sigma_{bH, aH}$.

\subsection{Wall and chambers structure}

\subsubsection{} The vector space $\mathcal{N}_{\R}(X)\coloneqq \R\oplus \NS_{\R}(X) \oplus \R$, endowed with the Mukai pairing, has signature $(2,\rho(X)),$ where $\rho(X)=\rk \Pic(X)$ is the Picard number of $X$.

\subsubsection{} It has been proven in $\cite{Bri08}$ that the kernel of the central charge 
$$ Z_{\beta, \omega} \colon\mathcal{N}_\R(X) \rightarrow \C  $$
is a codimension two subspace of $\mathcal{N}_\R(X)$ which is negative definite.
Let $k(\beta, \omega)\in G(\rho, \rho+2)=G(\rho, \mathcal{N}_{\R}(X))$ be the point of the Grassmannian corresponding to $\ker Z_{\beta, \omega}$.
We will call $V(X)\subseteq G(\rho,\rho+2)$ the set of points corresponding to some stability condition via the construction above. Given an object $E\in \mathcal{D}(X)$ there is a locally finite set of real hypersurfaces in $V(X)$ (walls) $\{W_\gamma \colon \gamma\in \Gamma\}$ such that each chamber, i.e. a connected component
$$C\subseteq V(X)\backslash \bigcup_{\gamma\in \Gamma } W_\gamma$$
has the following property: if $E$ is semistable for some condition $\sigma\in C$, then $E$ is semistable for all the conditions in $C$, as shown in  in \cite[Proposition 9.3]{Bri08}. Up to dropping some walls, we can assume that if $E$ is semistable (resp. unstable) in some chamber $C$, then it is unstable (resp. semistable) in the chambers sharing a wall with $C$.


\begin{proposition}[Wall-chamber structure]
Each wall $\mathcal{W}$ considered above is contained in a Schubert 1-cocycle of $G(\rho,\rho+2)$; in fact, $\mathcal{W}$ is a connected component of the intersection of such 1-cocycle with $V(X)$.
\end{proposition}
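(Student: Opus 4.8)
The plan is to translate the defining condition of a wall --- the coincidence of the $\sigma_{\beta,\omega}$-phases of $E$ and of a destabilizing subobject --- into a linear incidence condition between the point $k(\beta,\omega)$ and a fixed $2$-plane in $\mathcal{N}_{\R}(X)$, and then to recognize that incidence condition as the special codimension-one Schubert variety of $G(\rho,\rho+2)$.

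First I would recall that $\mathcal{W}$ is a locus on which $E$ is strictly semistable: there is a subobject $A\hookrightarrow E$ in the heart whose phase equals that of $E$. Since the central charge factors through the Mukai vector, this is precisely the condition that $Z_{\beta,\omega}(v(E))$ and $Z_{\beta,\omega}(v(A))$ lie on a common real line through the origin of $\C$. Setting $P:=\langle v(E),v(A)\rangle\subseteq \mathcal{N}_{\R}(X)$, which is a genuine $2$-plane because on an honest wall $v(A)$ is not proportional to $v(E)$, phase coincidence says exactly that the restriction $Z_{\beta,\omega}|_{P}$ fails to be injective.

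Next I would use that $K:=k(\beta,\omega)=\ker Z_{\beta,\omega}$ is the $\rho$-dimensional subspace defining the point of $G(\rho,\rho+2)$, and that $Z_{\beta,\omega}$ induces an isomorphism $\mathcal{N}_{\R}(X)/K\cong \C$. Hence $Z_{\beta,\omega}|_{P}$ is non-injective if and only if $K\cap P\neq 0$, i.e. $\dim(K\cap P)\geq 1$. Because $\dim P=2=(\rho+2)-\rho$ is the complementary dimension, the locus $\Sigma_{P}:=\{K\in G(\rho,\rho+2):K\cap P\neq 0\}$ is exactly the special Schubert $1$-cocycle attached to $P$, of codimension one in the Grassmannian. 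As $V(X)$ sits inside $G(\rho,\rho+2)$ as an open subset of the negative-definite locus, $\Sigma_{P}\cap V(X)$ is a real hypersurface, and we obtain the containment $\mathcal{W}\subseteq \Sigma_{P}\cap V(X)$, which is the first assertion.

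For the refinement I would show that $\mathcal{W}$ is open and closed in $\Sigma_{P}\cap V(X)$, hence a connected component. Closedness is immediate: on $\Sigma_{P}$ the object $E$ can never be stable, so the unstable locus is open and the semistable locus (to which $\mathcal{W}$ belongs) is closed. For openness I would invoke the local finiteness of the wall system of Proposition 9.3 in \cite{Bri08}: away from the lower-dimensional loci where $\Sigma_{P}$ meets the other Schubert cocycles $\Sigma_{P'}$, the Harder--Narasimhan and Jordan--H\"older types of $E$ are constant along $\Sigma_{P}$, so $A$ keeps realizing the phase coincidence and $E$ stays strictly semistable on the whole connected component; crossing $\Sigma_{P}$ transversally, the sign of the phase difference of $A$ and $E$ flips, producing on the two sides a chamber where $A$ destabilizes $E$ and one where it does not, which is precisely the adjacency required of a wall. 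Thus every connected component of $\Sigma_{P}\cap V(X)$ is either disjoint from or equal to the connected set $\mathcal{W}$, giving the claim. I expect this last step to be the main obstacle: one must control the interaction of $\Sigma_{P}$ with the remaining locally finite cocycles and verify that the destabilizing object $A$ persists, with constant numerical type, across the entire component, so that passing to a connected component does not split $\mathcal{W}$ any further.
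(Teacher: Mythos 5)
The containment half of your argument is correct and is essentially the paper's: on a wall some destabilizing $A\hookrightarrow E$ has the same phase as $E$, so $Z_{\beta,\omega}$ is non-injective on the plane $P=\langle v(E),v(A)\rangle$, i.e.\ $\ker Z_{\beta,\omega}$ meets $P$, which is the incidence condition cutting out the special Schubert $1$-cocycle $\Sigma_P$. The connected-component half, however, has a genuine gap, in both directions of your open-closed argument. Your closedness step rests on the claim that $E$ can never be stable on $\Sigma_P$, and this is false: the incidence $K\cap P\neq 0$ only forces $Z(A)$ and $Z(E)$ to be \emph{real}-proportional, and when the proportionality factor is negative (the continuously varying arguments differ by an odd multiple of $\pi$) there is no destabilization at all; moreover the heart rotates as the stability condition moves along $\Sigma_P$, so the fixed triangle $A\to E\to Q$ need not exhibit $A$ as a subobject of $E$ at points of the cocycle far from $\mathcal{W}$, where $E$ may perfectly well be stable. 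Your openness step is the part you yourself flag as unresolved, and it genuinely is: Proposition 9.3 of \cite{Bri08} gives local finiteness of walls, but neither constancy of the Jordan--H\"older type of $E$ along a fixed wall nor persistence of the destabilizer $A$ across the loci where $\Sigma_P$ meets the other cocycles, so the dichotomy you need is not established.

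What your proposal is missing is the one-line mechanism the paper actually uses, which closes both gaps at once: along the cocycle $Z(E)$ and $Z(A)$ are real-proportional, so the difference of their (continuously chosen) phases is an integer; being integer-valued and continuous it is locally constant, and since it vanishes at a point of $\mathcal{W}$ it vanishes identically on the connected component of the cocycle (intersected with $V(X)$) containing $\mathcal{W}$. Thus the same-phase condition defining the wall propagates along the entire component in one stroke; in particular the negative-proportionality scenario that breaks your closedness claim is excluded on that component, and no control of the remaining walls or of the JH filtration is needed. If you want to salvage your open-closed scheme, you should replace both of your steps by this continuity-plus-integrality observation.
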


\begin{proof}
For any wall $\mathcal{W}$ there is a subobject $F\hookrightarrow E$ such that $Z(E)=\lambda Z(F)$ for $\lambda\in \R$; this means that $v(E)+\lambda v(F) \in \ker Z$, therefore $\ker Z$ intersects the plane spanned by $v(E), v(F)$ in $\R^{\rho+2}$ and this defines a Schubert 1-cocycle, hence $\mathcal{W}$ is contained in such a cocycle. In order to show that the walls are whole connected components of such a 1-cocycle is enough to notice that the phase is continuous and, therefore, if there is a point of the 1-cocycle where the two objects have the same phase, then the difference of the phases varies continuously as the stability condition varies on the 1-cocycle, but the difference of phases is an even integer, so it must be 0.
\end{proof}

\subsubsection{} The following lemma gives out a connection between the classical notions of stability, e.g. Gieseker or slope-stability, and Bridgeland's notion, it is basically \cite[Lemma 2.12]{Fey17}, which was stated in the particular case where $\rho(X)=1$; the proof of our case is the same same as in \cite{Fey17}:

\begin{lemma}
\label{lem:shiftstab}
Let $H$ be an ample line bundle and $E$ a $\mu_H-$stable vector bundle with Mukai vector $v(E)=(r,\Delta, s)$. Let $\beta_0\in\nicefrac{\Delta}r+ H^\bot$ and $\omega=w H$ for some $w\in \R_+$. Then $E[1]$ is $\sigma_{\beta_0,\omega}-$stable of phase $1$.
\end{lemma}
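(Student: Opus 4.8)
The plan is to follow the strategy of \cite{Fey17}, isolating the single place where the hypothesis $\rho(X)=1$ was used and replacing it by the weaker condition $\beta_0\in\nicefrac{\Delta}{r}+H^\bot$. The decisive observation is a computation of the imaginary part of the central charge. Writing $\beta_0=\nicefrac{\Delta}{r}+\gamma$ with $\gamma\cdot H=0$, and using $\omega=wH$, the formula for $Z_{\beta_0,\omega}$ gives
$$\Im Z_{\beta_0,\omega}(E)=(\Delta-r\beta_0)\cdot\omega=-r\,w\,(\gamma\cdot H)=0\punto$$
This is exactly the identity that in the rank-one Picard case followed from $\beta_0=\nicefrac{\Delta}{r}$; here it holds for \emph{any} representative of the coset, because $\omega$ is proportional to $H$ and $\gamma\in H^\bot$.

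First I would locate $E$ inside Bridgeland's torsion pair $(\mathcal{T},\mathcal{F})$ attached to $\sigma_{\beta_0,\omega}$. Since $\omega=wH$ with $w>0$, the slope $\mu_\omega$ coincides with $\mu_H$, so $E$ is $\mu_\omega$-stable, hence $\mu_{\omega,\beta_0}$-semistable with a single Harder--Narasimhan factor; its slope is $\mu_{\omega,\beta_0}(E)=\Im Z_{\beta_0,\omega}(E)/r=0$. A torsion-free $\mu_{\omega,\beta_0}$-semistable sheaf of slope $0$ lies in $\mathcal{F}$, and therefore $E[1]\in\mathcal{F}[1]\subseteq\mathcal{A}(\beta_0,\omega)$, the tilted heart. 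Because shifting negates the central charge, $Z_{\beta_0,\omega}(E[1])=-Z_{\beta_0,\omega}(E)$ is real, and since $E[1]$ is a nonzero object of the heart, the positivity axiom of the stability condition forces $Z_{\beta_0,\omega}(E[1])$ onto the negative real axis, i.e. phase exactly $1$. If one prefers an explicit check, substituting $\beta_0$ into the real part yields $\Re Z_{\beta_0,\omega}(E)=\frac{\langle v(E),v(E)\rangle}{2r}-\frac{r\gamma^2}{2}+\frac{r\omega^2}{2}$, which is positive: $\langle v(E),v(E)\rangle\geq-2$ as $E$ is stable, $\gamma^2\leq0$ by the Hodge index theorem since $\gamma\in H^\bot$ and $H$ is ample, and $\omega^2>2$ by the hypotheses of Lemma 6.2 in \cite{Bri08}.

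Finally, I would upgrade semistability (automatic, since phase $1$ is maximal in $\mathcal{A}(\beta_0,\omega)$) to stability. Suppose $E[1]$ were strictly semistable; then it would contain a $\sigma_{\beta_0,\omega}$-stable subobject $A$ of phase $1$. Such a stable object has a single nonzero cohomology sheaf for the standard $t$-structure: either $A\cong T$ with $T$ a sheaf of zero-dimensional support, or $A\cong S[1]$ with $S$ a $\mu_{\omega,\beta_0}$-stable torsion-free sheaf of slope $0$. The first case is impossible, because $\mathrm{Hom}(T,E[1])=\mathrm{Ext}^1(T,E)=0$ for $E$ locally free and $T$ supported in dimension zero. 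In the second case $\mathrm{Hom}(S[1],E[1])=\mathrm{Hom}(S,E)$ produces a nonzero morphism between $\mu_H$-stable sheaves of equal slope, which is forced to be an isomorphism by comparing the slopes of its image as a subsheaf of $E$ and as a quotient of $S$; hence $A\cong E[1]$ is not proper. Therefore $E[1]$ is stable.

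The main obstacle is precisely this last step: one must translate a Bridgeland subobject into genuine cohomology sheaves and run the slope bookkeeping so that no destabilizing phase-$1$ piece survives, the subtle point being the exclusion of the zero-dimensional torsion subobjects via the vanishing $\mathrm{Ext}^1(T,E)=0$. Everything else reproduces the argument of \cite{Fey17}, now valid for arbitrary $\rho(X)$ thanks to the opening identity $\Im Z_{\beta_0,\omega}(E)=0$.
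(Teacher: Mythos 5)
Your proof is correct and follows essentially the same route as the paper, which in fact offers no written proof at all: it simply asserts that the proof of Lemma 2.12 in \cite{Fey17} carries over verbatim from the case $\rho(X)=1$. Your write-up makes explicit exactly why that assertion holds, namely the identity $\Im Z_{\beta_0,\omega}(E)=-rw(\gamma\cdot H)=0$ for any representative $\beta_0$ of the coset $\nicefrac{\Delta}r+H^\bot$ (valid because $\omega$ is proportional to $H$), after which the placement of $E$ in $\mathcal{F}$, the phase-$1$ computation, and the exclusion of zero-dimensional and shifted-sheaf destabilizers proceed as in \cite{Fey17}.
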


\subsubsection{} We conclude the section introducing a tool which allows us to bound the number of global sections of a sheaf on $X$, provided that we know something about its walls:

\begin{lemma}
\label{lem:wallglosect}
Let $\sigma_{\beta, \omega}$ be a stability condition sufficiently close to the pole $T=\{\langle \exp(\beta+i\omega), v(\mathcal{O}_X) \rangle=0\}\subseteq G(\rho,\rho+2)\}$ such that $\beta\cdot \omega < 0$. Assume $E\in\mathcal{D}(X)$ is a semistable object of the same phase as $\mathcal{O}_X$ with respect to $\sigma_{\beta,\omega}$ and that $\sigma_{\beta,\omega}$ lies in the closure of a chamber where $E$ is semistable; suppose, moreover, its Mukai vector is $v(E)=(r,cH,s)$ for some primitive effective $H\in \NS(X)$ and some $c\in \Z$. Then
$$ h^0(X,E)\leq \frac{\chi(E)}2+\frac{\sqrt{(r-s)^2+c^2(H^2+4)}}{2} \punto $$
\end{lemma}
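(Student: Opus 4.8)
The plan is to convert the cohomological quantity $h^0(X,E)$ into a $\mathrm{Hom}$-count inside the abelian category of semistable objects of the common phase, and then to control that count by a self-intersection estimate on the Mukai lattice. First I would record that $h^0(X,E)=\dim\mathrm{Hom}(\mathcal{O}_X,E)$ and, by Serre duality on the $\Kt$ surface, $h^2(X,E)=\dim\mathrm{Hom}(E,\mathcal{O}_X)$, so that by Riemann--Roch $h^0-h^1+h^2=\chi(E)=-\langle v(\mathcal{O}_X),v(E)\rangle=r+s$. Consequently the assertion is equivalent to the bound $2h^0(X,E)-\chi(E)\le\sqrt{(r-s)^2+c^2(H^2+4)}$, and the entire problem reduces to estimating $\dim\mathrm{Hom}(\mathcal{O}_X,E)$ together with the extension term that $h^1$ contributes.

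Next I would exploit the hypotheses on $\sigma_{\beta,\omega}$. Since $\sigma$ is taken close to the pole $T$ and $\beta\cdot\omega<0$, I would first verify that $\mathcal{O}_X$ is $\sigma_{\beta,\omega}$-stable of the same phase $\phi$ as $E$, in the spirit of Lemma \ref{lem:shiftstab}: as $\sigma\to T$ the central charge $Z_{\beta,\omega}(\mathcal{O}_X)=\langle\exp(\beta+i\omega),v(\mathcal{O}_X)\rangle$ tends to $0$, which is exactly what drives $v(\mathcal{O}_X)$ into a neighbourhood of the negative-definite kernel of $Z$ and pins down its phase. Granting this, both $\mathcal{O}_X$ and $E$ lie in the finite-length abelian category $\mathcal{P}(\phi)$ of $\sigma$-semistable objects of phase $\phi$, in which $\mathcal{O}_X$ is simple. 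A basis of $\mathrm{Hom}(\mathcal{O}_X,E)$ then produces an injection $\mathcal{O}_X^{\oplus h^0}\hookrightarrow E$ in $\mathcal{P}(\phi)$ --- injective because a nonzero kernel would be a subobject of $\mathcal{O}_X^{\oplus h^0}$, hence a direct sum of copies of the simple object $\mathcal{O}_X$, contradicting the linear independence of the chosen sections --- and dually a surjection $E\twoheadrightarrow\mathcal{O}_X^{\oplus h^2}$.

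Finally I would run Mukai's cancellation argument on the composite $\mathcal{O}_X^{\oplus h^0}\to E\to\mathcal{O}_X^{\oplus h^2}$: splitting off the $\mathcal{O}_X$-summands on which this composite is invertible reduces $E$ to a subquotient $G$ with $v(G)=v(E)-N\,v(\mathcal{O}_X)$ and with no surviving maps to or from $\mathcal{O}_X$, whence $\langle v(\mathcal{O}_X),v(G)\rangle\ge0$; feeding in $v(\mathcal{O}_X)=(1,0,1)$ and $v(E)=(r,cH,s)$ then turns a self-intersection bound on $G$ into a quadratic inequality in $N$, to be resolved via the Hodge-index structure of the signature-$(2,\rho)$ lattice restricted to the rank-two sublattice spanned by $v(\mathcal{O}_X)$ and $v(E)$. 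The main obstacle is precisely this last step: a naive estimate of the shape $\langle v(G),v(G)\rangle\ge-2$ yields a discriminant of the form $(r-s)^2+2c^2H^2+4$, of the wrong shape, so one must isolate the genuinely operative self-intersection inequality for the reduced object and check that the near-pole hypothesis $\beta\cdot\omega<0$ (rather than its Serre dual) is the one in force, so that the integrality of $c$ and the primitivity and effectivity of the class $H$ normalise the discriminant to exactly $(r-s)^2+c^2(H^2+4)$. This is where the negative-definiteness of $\ker Z_{\beta,\omega}$ and the wall-chamber description of the preceding proposition must be used quantitatively.
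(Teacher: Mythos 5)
Your opening moves coincide with the paper's: the stability of $\mathcal{O}_X$ near the pole $T$ (your sketch is looser than the paper's argument, which picks $p\in T$ avoiding all spherical classes except $(1,0,1)$ and excludes walls by a mass estimate from Proposition 9.3 of \cite{Bri08}, but it is the right idea), and the injectivity of the evaluation map $\mathcal{O}_X^{\oplus h^0}\hookrightarrow E$ in the finite-length category $\mathcal{P}(\phi)$, with semistable cokernel $Q$. The genuine gap is in your endgame, and it is not merely the ``normalisation of the discriminant'' you flag. First, your cancellation inequality points the wrong way: with $v(G)=v(E)-N\,v(\mathcal{O}_X)$ one computes $\langle v(\mathcal{O}_X),v(G)\rangle = 2N-(r+s)$, so $\hom(\mathcal{O}_X,G)=\hom(G,\mathcal{O}_X)=0$ yields $N\geq \chi(E)/2$, a \emph{lower} bound on the number of cancelled summands, which cannot bound $h^0$ from above (and the surjection onto $\mathcal{O}_X^{\oplus h^2}$ is a red herring: $h^2$ plays no role in the paper's proof). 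Second, the fallback estimate $v(G)^2\geq -2$ is simply false for strictly semistable objects --- $v(\mathcal{O}_X^{\oplus 2})^2=-8$ --- and this failure is unavoidable here, because the cokernel $Q$ of the evaluation map can genuinely contain copies of $\mathcal{O}_X$ among its Jordan--H\"older factors. No appeal to the Hodge-index structure of the rank-two sublattice spanned by $v(\mathcal{O}_X)$ and $v(E)$ can repair this.

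What the paper supplies, and what is entirely absent from your proposal, is a mechanism bounding the Jordan--H\"older length of the non-trivial part of $Q$ by $c$. Concretely: (i) since the JH filtration of $Q$ is constant along the real hypersurface $\{\varphi(E)=\varphi(\mathcal{O}_X)\}$, every JH vector must lie in the plane spanned by $v(E)$ and $v(\mathcal{O}_X)$ (otherwise equal phases for three distinct planes would confine $\sigma$ to a codimension-two Schubert cycle), so $w_i=m_i v(\mathcal{O}_X)+t_i v(E)$ with $m_i,t_i\in\Q$; (ii) degenerating $\sigma$ toward the pole \emph{within the locus} $\beta\cdot\omega<0$ --- this is where that hypothesis actually acts, together with effectivity of $H$ --- permanence of sign of $\Im Z(Q_i)$ gives $t_i c\, H\cdot\omega\geq 0$, hence $t_i\geq 0$, with $t_i=0$ forcing $Q_i\cong\mathcal{O}_X$; (iii) primitivity of $H$ forces $t_i\in\frac1c\Z$ for $t_i>0$, and since $\sum_i t_i=1$ at most $c$ factors are not $\mathcal{O}_X$; (iv) the pairwise estimate behind Lemma \ref{lem:estsquare} then gives $\left(v(E)-(h^0+i_0)v(\mathcal{O}_X)\right)^2\geq -2c^2$, and positivity of the quadratic $f(x)=(v(E)-x\,v(\mathcal{O}_X))^2+2c^2$ at $x=h^0+i_0$ yields the bound. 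You invoke $\beta\cdot\omega<0$, effectivity and primitivity only decoratively, never operatively. You also omit the case $c=0$, which the paper handles separately by induction on $r$ via the cone $Q$ with $v(Q)=(r-1,0,r-1)$. One mitigating remark: the paper is internally inconsistent --- its proof (and the application in Theorem \ref{thm:injA}, via $\sqrt{(s+2)^2+4}$) actually produces $c^2(2H^2+4)$, while the statement prints $c^2(H^2+4)$ --- so the exact target you were trying to normalise to is itself a typo; but since the applications have $c=1$, this does not affect the rest of the paper, whereas your argument as it stands proves no bound at all.
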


\begin{proof}
We begin by showing that $\mathcal{O}_X$ is $\sigma_{\beta,\omega}-$stable if $k(\beta,\omega)$ is sufficiently close to $T$.

Consider a point $p\in T$ such that the plane corresponding to $p$ does not contain the Mukai vector of any spherical object other than $v(\mathcal{O}_X)=(1,0,1)$; the set of points with this property is an open and dense subset of $T$. Consider an open neighbourhood $U\ni p$ which does not contain any plane passing through the Mukai vector of a spherical object different than $(1,0,1)$. If $k(\beta, \omega)$ is close enough to $T$, then we can choose $p$ and $U$ such that $k(\beta,\omega)\in U$.

Moreover, since $\Amp(X)\otimes \Q$ is dense in $\Amp(X) \otimes \R$, there is $L\in \Amp(X)$ and $\beta_0\in L^\bot$ such that $k(\beta_0,wL)\in U$ for some $w\in \R$.

Lemma \ref{lem:shiftstab} ensures that $\mathcal{O}_X$ is $\sigma_{\beta_0,\omega}-$stable whenever $\omega$ is the multiple of an integral ample line bundle. Now, if $\mathcal{O}_X$ is not $\sigma_{\beta,\omega}-$stable, then there is a wall $\mathcal{W}$ for $\mathcal{O}_X$ which intersects $U$.

If, by contradiction, there is such a wall $\mathcal{W}$ and $F\hookrightarrow \mathcal{O}_X$ is the object defining the wall, then choose a compact set $B\subseteq U$ intersecting $\mathcal{W}$; by the construction of \cite[Proposition 9.3]{Bri08}, there is a stability condition $\tau=(Z',\mathcal{P})\in B$, such that $|Z'(F)| < m_\tau(\mathcal{O}_X)$. But if we choose $U$ small enough, $\sup_{\sigma\in U}\{m_\sigma(\mathcal{O}_X)\}$ is arbitrarily small and the only object $F$ with $v(F)^2\geq -2$ and $|Z'(F)| \ll 1$ is $\mathcal{O}_X$ (or an integral translation of its), hence there cannot be such a wall $\mathcal{W}$ and $\mathcal{O}_X$ is stable.

Let us assume $c=0$, so $0<r=s$ because $\mathcal{O}_X$ and $E$ have the same phase; we have to prove that $\displaystyle h^0(E)\leq \frac{\chi(E)}2 = \frac{r+s}2= r$. 

%
%

The inequality is obvious for $r=1$: a global section of $E$ would give an isomorphism between $E$ and $\mathcal{O}_X$.

Assume the statement is true for $r-1$. If $\mathcal{O}_X\rightarrow E$ is a global section, then we complete the triangle with $\mathcal{O}_X\rightarrow E \rightarrow Q$, where $Q$ is semistable of the same phase as $E,\mathcal{O}_X$ and $v(Q)=(r-1,0,r-1)$, so by induction $h^0(Q)\leq r-1$. After applying the functor $\Hom_X(\mathcal{O}_X,\cdot)$ to the previous triangle, we get the long exact sequence
$$ \cdots\rightarrow \Hom(\mathcal{O}_X,Q[-1])\rightarrow \Hom(\mathcal{O}_X,\mathcal{O}_X) \rightarrow \Hom(\mathcal{O}_X,E)\rightarrow\Hom(\mathcal{O}_X,Q) \rightarrow \Hom(\mathcal{O}_X,\mathcal{O}_X[1]) \rightarrow \cdots \virgola  $$
but $Q[-1]$ is semistable of phase smaller than $\mathcal{O}_X$, so $\Hom(\mathcal{O}_X,Q[-1])=0$ and $\Hom(\mathcal{O}_X,\mathcal{O}_X[1])=H^1(\mathcal{O}_X)=0$, hence we get
$$ 0\rightarrow \Hom(\mathcal{O}_X,\mathcal{O}_X) \rightarrow \Hom(\mathcal{O}_X,E)\rightarrow\Hom(\mathcal{O}_X,Q) \rightarrow 0 \virgola  $$
which yield the desired result.

Let us assume, now, $c>0$.
Consider the evaluation map
$$ \Hom_X(\mathcal{O}_X,E)\otimes \mathcal{O}_X \cong \mathcal{O}_X^{h^0(E)} \rightarrow E  \punto $$

Since $\mathcal{O}_X$ is $\sigma_{\beta,\omega}-$stable, i.e. a simple object in the category of semistable objects with the same phase as $\mathcal{O}_X$, we infer that the evaluation map is injective and its cokernel $Q$ is semistable of the same phase as $\mathcal{O}_X$.

Let $Q_1,\dots,Q_n$ be the Jordan-Hölder factors of $Q$ and let $w_i=v(Q_i)$; notice that as $\sigma$ varies in a small enough neigbourhood $\sigma_{\beta,\omega}\in U \subseteq \mathcal{H}$, where $\mathcal{H}$ is the Schubert 1-cocycle of the stability conditions satisfying $\frac{Z(E)}{Z(\mathcal{O}_X)}\in \R$, the JH filtration stays the same. We are, now, going to show that all these vectors lie in plane spanned by $v(E)=(r,cH,s)$ and $v(\mathcal{O}_X)=(1,0,1)$. Suppose, by contradiction, there is one of those vectors (WLOG say $w_1$) which does not lie on that plane.
The set
$$\Phi\coloneqq \{ \sigma=(Z,\varphi)\in V(X) \colon \varphi(E)+2\Z\pi=\varphi(\bigO_X)+2\Z\pi=\varphi(Q_1)+2\Z\pi \} $$
is a Schubert 2-cocycle since it consists on the stability conditions such that $\ker \Z\subseteq \mathcal{N}_\R(X)$ intersects the planes $\langle v(E), v(\mathcal{O}_X) \rangle, \langle v(E), w_1 \rangle, \langle \mathcal{O}_X, w_1\rangle$. Hence we get a contradiction because $\codim_{V(X)}(\Phi)=2$, while the JH filtration is invariant along a real hypersurface of $V(X)$.
It follows that $\forall i\in\{1,\dots,n\}$ there are $m_i,t_i\in \Q$ such that $w_i=m_iv(\mathcal{O}_X)+t_i v(E)$. We also have the equality
$$ \sum_{i=1}^n w_i=v(E)-h^0(E)v(\mathcal{O}_X)  \punto $$

Now we can move the stability condition $\sigma_{\beta,\omega}$ closer to the point $k(\beta_0,\omega_0)\in T$ (where $\omega_0=\sqrt{\nicefrac{2}{\omega^2}}\cdot\omega$ and $\beta_0\in \omega^\bot$), staying inside the 1-cocycle containing those stability conditions such that $E,\mathcal{O}_X$ have the same phase and $\beta\cdot \omega<0$. The JH filtration of $Q$ does not change and each factor $Q_i$ still has the same phase as $\mathcal{O}_X$, hence, by permanence of sign we get
$$ \lim_{k(\beta,\omega)\rightarrow k(\beta_0,\omega_0)}\Im (Z_{\beta,\omega}(Q_i))=\lim_{\beta\rightarrow \beta_0}(t_i(cH-r \beta)-m_i\beta)\omega=t_i c H\cdot\omega\geq 0 \punto $$

Since $H$ is effective and $\omega$ is ample, $H\cdot \omega>0$, hence we infer $t_i\geq 0$. Now, if $t_i=0$, then $v(Q_i)=m_iv(\mathcal{O}_X)$, so we conclude $Q_i\cong \mathcal{O}_X$; up to reordering factors we may assume $Q_1\cong\dots\cong Q_{i_0}\cong \mathcal{O}_X$ and $t_i>0$ for $i>i_0$. Now, if $t_i>0$, we need $t_ic H\in \NS(X)$ and since $H$ is a primitive class $t_i\in \frac1c\Z$, hence $n-i_0\leq c$.

It follows, from Lemma \ref{lem:estsquare}, that
$$ {\left({v(E)-\left({h^0(X,E)+i_0}\right)v(\mathcal{O}_X) }\right) }^2={\left({ \sum_{i=i_0+1}^n w_i }\right)}^2 \geq -2c^2  \punto $$

Consider, now, the quadratic polynomial

$$ f(x)={\left({ v(E)-xv(\mathcal{O}_X) }\right)}^2+2 c^2= -2x^2+ 2x \chi(E) +v(E)^2+2c(E)^2  \virgola $$

as it is positive for $x=i_0+h^0(X,E)$ we may conclude the desired inequality:

$$ h^0(X,E)\leq i_0+h^0(X,E)\leq \frac{\chi(E)}2+\frac{\sqrt{{(r-s)}^2+c^2(2H^2+4)}}2 \punto $$
\end{proof}

\begin{lemma}
\label{lem:estsquare}
Let $E\in \mathcal{D}(X)$ be a $\sigma_{\beta,\omega}-$semistable object with a Jordan-Hölder filtration of length $n$. Then
$$ v(E)^2\geq  -2n^2 \punto $$
\end{lemma}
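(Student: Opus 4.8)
The plan is to reduce the self-intersection $v(E)^2$ to a count of homomorphisms between the Jordan-Hölder factors of $E$, using the Mukai-Riemann-Roch formula and Serre duality on the $\Kt$ surface $X$. First I would record the structural facts: the factors $Q_1,\dots,Q_n$ of a Jordan-Hölder filtration of a $\sigma_{\beta,\omega}$-semistable object are themselves $\sigma_{\beta,\omega}$-stable, all of the same phase as $E$. In particular each $Q_i$ is simple, so $\dim\Hom(Q_i,Q_i)=1$, and since $X$ carries a trivial canonical bundle, Serre duality gives $\dim\Hom(Q_i,Q_j[2])=\dim\Hom(Q_j,Q_i)$ for all $i,j$. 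Because $v$ is additive on exact triangles, $v(E)=\sum_{i=1}^n v(Q_i)$, whence
\[
  v(E)^2=\sum_{i,j=1}^n \langle v(Q_i),v(Q_j)\rangle=-\sum_{i,j=1}^n \chi(Q_i,Q_j),
\]
the second equality being the Riemann-Roch identity $\chi(A,B)=-\langle v(A),v(B)\rangle$.

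Next I would expand each Euler characteristic as $\chi(Q_i,Q_j)=\dim\Hom(Q_i,Q_j)-\dim\Hom(Q_i,Q_j[1])+\dim\Hom(Q_i,Q_j[2])$ and symmetrize the degree-$0$ and degree-$2$ contributions using the Serre duality identity above. After relabelling the summation indices the two terms coincide, and we are left with
\[
  v(E)^2=\sum_{i,j=1}^n \dim\Hom(Q_i,Q_j[1])-2\sum_{i,j=1}^n \dim\Hom(Q_i,Q_j).
\]
The first sum is manifestly non-negative and may be discarded, so it suffices to bound $\sum_{i,j}\dim\Hom(Q_i,Q_j)$ from above.

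For this last estimate I would invoke stability directly: two stable objects of the same phase admit only the zero morphism unless they are isomorphic, and in the isomorphic (hence simple) case the space of homomorphisms is one-dimensional. Therefore $\dim\Hom(Q_i,Q_j)\le 1$ for every pair $(i,j)$, and since there are exactly $n^2$ such pairs we get $\sum_{i,j}\dim\Hom(Q_i,Q_j)\le n^2$, which yields $v(E)^2\ge -2n^2$ at once.

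The step that demands the most care is the treatment of \emph{repeated} factors. The naive attempt, bounding each diagonal term by $v(Q_i)^2\ge -2$ and hoping the cross-terms are non-negative, breaks down precisely when several factors are isomorphic to a common spherical object $S$: there $v(E)=n\,v(S)$ and $v(E)^2=-2n^2$, so the repeated factors contribute the full negativity and equality is attained. The symmetrization via Serre duality is exactly what absorbs these repetitions cleanly, converting the delicate quadratic estimate into the elementary uniform bound $\dim\Hom(Q_i,Q_j)\le 1$ over all $n^2$ pairs, stable and repeated alike.
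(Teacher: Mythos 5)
Your proof is correct and takes essentially the same route as the paper: both decompose $v(E)=\sum_i v(Q_i)$, rewrite each pairing as $\langle v(Q_i),v(Q_j)\rangle=-\chi(Q_i,Q_j)$, and combine Serre duality with the fact that $\hom(Q_i,Q_j)\leq 1$ for stable objects of the same phase to bound the $n^2$ terms below by $-2$. Your global symmetrization of the degree-$0$ and degree-$2$ contributions is simply a reorganized form of the paper's term-by-term estimate $\langle v(E_i),v(E_j)\rangle\geq -\hom(E_i,E_j)-\hom(E_j,E_i)\geq -2$, so the two arguments coincide in substance.
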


\begin{proof}
Consider a Jordan-Hölder filtration $0=\tilde{E}_0\subseteq\dots\subseteq \tilde{E}_n=E$ with respect to $\sigma_{0,\omega}$; its quotients $E_i=\tilde{E}_i/\tilde{E}_{i-1}$ have the same phase as $E$. 

Now notice that $v(E)^2=\sum_i v(E_i)^2 + 2\sum_{i<j}\langle v(E_i), v(E_j) \rangle$. Since $E_j$ are stable, we have $\Hom_X(E_i,E_j)=0$ if $E_i\neq E_j$ and $\Hom_X(E_i,E_i)=\C$; therefore $\langle E_i,E_j \rangle=-\hom_X(E_i,E_j)+\ext_X^1(E_i,E_j)-\ext_X^2(E_i,E_j)\geq -2 $, where the last term is due to Serre's duality, i.e. $\ext^2(E_i,E_j)=\hom(E_j,E_i)$. It follows that $v(E)^2\geq -2n^2$.
\end{proof}

\section{The Brill-Noether loci}
\label{sect:BNloc}

\subsubsection{} Throughout this section we will consider a smooth curve embedded in a $\Kt$ surface: $i\colon C \hookrightarrow X$. We will consider the restriction of coherent sheaves $E\in \Coh(X)$ to the curve, i.e. we will consider sheaves of the form $i^\ast E= E|_{C}\in \Coh(C)$.
In this section we are going to study the map $\Coh(X)\rightarrow \Coh(C)$ sending a sheaf on $X$ into its restriction on $C$.
If we fix a Mukai vector $v\in \mathcal{N}(X)$ and an ample line bundle $H\in \NS(X)$, then there is a moduli space $M_{X,H}(v)$ parametrising $H-$Gieseker semistable sheaves $E$ with $v(E)=v$. On the other hand, we can fix non-negative integers $r,d,h\in \N$ and consider the Brill-Noether locus $B_C^h(r,d)$ of slope-stable coherent sheaves $F\in \Coh(C)$ of rank $r$, degree $d$ and having at least $h$ global sections. These two spaces are algebraic varieties; during this section we will see that with a suitable choice of the parametres we introduced before, the restriction of the map $\Coh(X)\rightarrow \Coh(C)$ to these algebraic varieties gives a morphism $\psi\colon N\coloneqq M_{X,H}(v)\rightarrow T\coloneqq B_C^h(r,d)$.

\subsubsection{} We are going to face the problem by studying the stability, in Bridgeland's sense, of sheaves in $N$ and in $i_\ast T$. This strategy has two advantages: firstly in many cases it is possible to understand a link between slope/Gieseker stability of our objects and their stability in Bridgeland's sense; on the other hand many good property of the restriction map can be deduced via these techniques. The following lemma pushes towards that direction, as it gives an useful relation between the slope stability notion for a vector bundle on $C$ and the Bridgeland stability notion for its push-forward on $X$:

\begin{lemma}
\label{lem:pushingstab}
Let $F$ a vector bundle on the curve $C$. If $F$ is slope-(semi)stable, then for any $\omega\in \Amp(X)$ there exists $\lambda_0\in \R_+$ such that $\forall \beta\in \NS(X)\otimes \R$, $\forall \lambda>\lambda_0$, we have that $i_\ast(F)$ is $\sigma_{\beta,\lambda\omega}-$(semi)stable.

Conversely, if there exists a stability condition $\sigma_{\beta,\omega}$ with respect to which $i_\ast F$ is (semi)stable, then $F$ is slope-(semi)stable.
\end{lemma}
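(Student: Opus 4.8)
The plan is to read off Bridgeland (semi)stability of the pure one–dimensional sheaf $i_\ast F$ as a slope comparison on $C$. First I would record the numerical data: since $F$ is a vector bundle on $C$, Grothendieck–Riemann–Roch together with $[C]^2=2g-2$ give
$$v(i_\ast F)=\bigl(0,\ \mathrm{rk}(F)\,[C],\ \deg F+\mathrm{rk}(F)(1-g)\bigr)\punto$$
In particular $i_\ast F$ is pure of dimension one, hence lies in the torsion part of every tilted heart $\mathcal A(\beta,\omega)$ and is an object of the heart for \emph{every} $\sigma_{\beta,\omega}$. From the central charge one gets $\Im Z_{\beta,\omega}(i_\ast F)=\mathrm{rk}(F)\,(C\cdot\omega)>0$, while $\Re Z_{\beta,\lambda\omega}(i_\ast F)=\mathrm{rk}(F)(\beta\cdot C)-\deg F-\mathrm{rk}(F)(1-g)$ is independent of $\lambda$ (the $\omega^2$-term drops out because $r=0$); thus the phase of $i_\ast F$ tends to $\tfrac12$ as $\lambda\to\infty$. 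Next I would take an arbitrary subobject $A\hookrightarrow i_\ast F$ in $\mathcal A(\beta,\lambda\omega)$ and run the long exact cohomology sequence of the triangle $A\to i_\ast F\to B$: since $H^{-1}(i_\ast F)=0$, the object $A$ is an honest sheaf sitting in $0\to H^{-1}(B)\to A\to\mathrm{im}\to 0$ with $H^{-1}(B)\in\mathcal F$ and $\mathrm{im}\subseteq i_\ast F$, which splits the analysis according to $\rho:=\mathrm{rk}(A)$.

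If $\rho=0$ then $H^{-1}(B)$, being a subsheaf of $A$, has rank $0$, i.e.\ is a torsion sheaf; as $\mathcal F$ contains no nonzero torsion sheaf, $H^{-1}(B)=0$, so $A$ is a genuine subsheaf of $i_\ast F$. Because $i_\ast F$ is annihilated by the ideal of $C$, so is $A$, whence $A=i_\ast A'$ for a unique subsheaf $A'\subseteq F$, which I may take saturated. Substituting $v(i_\ast A')=(0,\mathrm{rk}(A')[C],\deg A'+\mathrm{rk}(A')(1-g))$ into $Z_{\beta,\omega}$, a short computation shows that the sign of $\phi(i_\ast A')-\phi(i_\ast F)$ equals that of $\mu(A')-\mu(F)$, \emph{independently of $\beta$ and of $\lambda$}. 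This single computation already yields the converse: for any stability condition $\sigma_{\beta,\omega}$ the sheaves $i_\ast A'$ are genuine subobjects of $i_\ast F$ inside the heart, so (semi)stability of $i_\ast F$ forces $\mu(A')<\mu(F)$ (resp.\ $\le$) for every proper subbundle $A'\subseteq F$, i.e.\ slope-(semi)stability of $F$.

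For the direct implication it remains to control subobjects with $\rho>0$. Here $\Re Z_{\beta,\lambda\omega}(A)=\rho\,\tfrac{\lambda^2\omega^2}{2}+O(1)$ grows like $\lambda^2$, whereas $\Im Z_{\beta,\lambda\omega}(A)=\lambda\,(c_1(A)-\rho\beta)\cdot\omega$ grows at most like $\lambda$ and is bounded above by $\Im Z(i_\ast F)$; hence $\phi(A)\to 0<\tfrac12$ and no such $A$ destabilises $i_\ast F$ once $\lambda$ is large. I expect the \emph{main obstacle} to be the uniformity of this bound, namely producing one $\lambda_0$ that rules out all positive-rank subobjects simultaneously. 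The plan is to reduce to a $\sigma_{\beta,\lambda\omega}$-stable subobject $S$ (a Jordan–Hölder factor of the maximal destabilising subobject, which has phase $\ge\phi(i_\ast F)$), for which Lemma \ref{lem:estsquare} with $n=1$ gives $v(S)^2\ge -2$; together with $0\le\Im Z(S)\le\Im Z(i_\ast F)$ and the Hodge index theorem, this confines $v(S)$ to finitely many classes, each of which admits an individual threshold from the asymptotics above, and $\lambda_0$ is their maximum (equivalently, one checks that the ray $\lambda\mapsto\sigma_{\beta,\lambda\omega}$ meets only finitely many of the locally finite walls of $i_\ast F$). Combining the two cases: for $\lambda>\lambda_0$ the only subobjects whose phase can reach that of $i_\ast F$ are the $i_\ast A'$, and by the rank-zero computation they do so exactly when $\mu(A')\ge\mu(F)$; therefore $i_\ast F$ is $\sigma_{\beta,\lambda\omega}$-(semi)stable precisely when $F$ is slope-(semi)stable, as claimed.
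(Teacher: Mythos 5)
Your proposal follows the same overall strategy as the paper's proof: place $i_\ast F$ in the heart of every $\sigma_{\beta,\omega}$ (rank-zero sheaves lie in the torsion part of the tilt), observe that $\Re Z_{\beta,\lambda\omega}(i_\ast F)$ is independent of $\lambda$ while the imaginary part grows, so $\varphi(i_\ast F)\to\frac12$, whereas every positive-rank subobject has phase tending to $0$; and reduce the converse to the fact that rank-zero subobjects of $i_\ast F$ in the heart are exactly pushforwards $i_\ast A'$ of subsheaves $A'\subseteq F$, for which the phase comparison is equivalent to the slope comparison, uniformly in $\beta$ and $\lambda$. On two points you are actually more careful than the paper: the paper merely asserts the correspondence between subobjects of $i_\ast F$ and subsheaves of $F$, whereas your long-exact-sequence argument ($H^{-1}(A)=0$ because $H^{-1}(i_\ast F)=0$; in the rank-zero case $H^{-1}(B)$ is a torsion subsheaf of $A$ lying in the torsion-free category $\mathcal{F}$, hence zero; scheme-theoretic support on $C$) genuinely justifies it, and your explicit computation that the sign of $\varphi(i_\ast A')-\varphi(i_\ast F)$ equals the sign of $\mu(A')-\mu(F)$ is the substance behind the paper's one-line claim.

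The one real divergence is the uniformity of $\lambda_0$, which you rightly flag as the main obstacle. The paper settles it by citation: Theorem 3.11 of \cite{Mac14} guarantees that along the ray $\lambda\mapsto\sigma_{\beta,\lambda\omega}$ the walls for $v(i_\ast F)$ are eventually crossed no more, so for $\lambda>\lambda_0$ the stability condition stays in one chamber (the Gieseker chamber). Your self-contained substitute is the right kind of argument (essentially Bridgeland's large-volume-limit bounding), but as written it has a soft spot: the constraints $v(S)^2\geq -2$, $0\leq \Im Z(S)\leq \Im Z(i_\ast F)$ and Hodge index do \emph{not} confine $v(S)$ to finitely many classes --- for instance, with $\beta=0$ every class $(r,0,-m)$ with $r\geq 1$, $m\geq 0$ satisfies all three --- so ``one threshold per class, then take the maximum'' does not literally close the argument; likewise, local finiteness of walls controls compact regions of the stability manifold, not an unbounded ray. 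What does work is a uniform estimate rather than a finiteness statement: from the bound on $\Im Z(S)$ and Hodge index one gets $(c_1(S)\cdot\omega)^2\leq C_1\,\mathrm{rk}(S)^2$ and hence, via $v(S)^2\geq -2$, a bound of the form $\Re Z_{\beta,\lambda\omega}(S)\geq \mathrm{rk}(S)\,(\lambda^2\omega^2/2-C)$ with $C$ depending only on $\beta$, $\omega$ and $v(i_\ast F)$, which excludes all positive-rank destabilizers simultaneously once $\lambda^2\omega^2$ exceeds a fixed constant. With that replacement --- or simply with the paper's appeal to \cite{Mac14} --- your proof is complete and otherwise matches the paper's.
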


\begin{proof}
Any coherent sheaf with rank 0 always belongs to the heart $\mathcal{A}=\mathcal{A}(\beta, \omega)$, $\forall \beta, \omega$; in particular $i_\ast F\in \mathcal{A}$ and it is enough to study the phase of subobjects of $i_\ast F$ in $\mathcal{A}$.

Now, $i_\ast F'$ is a subobject of $i_\ast F$ (in $\Coh(X)$) if and only if $F'\subseteq F$ (in $\Coh(C)$) and the inequality between their phases becomes equivalent to the inequality between their slopes. This proves the second part of the lemma, while for the first part we have only to study sheaves with positive rank.

Notice that $\Re Z_{\beta,\lambda\omega}(i_\ast F)$ is constant when $\lambda$ varies, while $\Im Z_{\beta,\lambda\omega}(i_\ast F)$ is increasing (and unbounded) in $\lambda$, therefore $\displaystyle\lim_{\lambda\rightarrow +\infty} \varphi_{\beta,\lambda\omega}(i_\ast F)= \frac12$.

If $\rk(E)>0$ we look at the terms $\rk(E)\omega^2$ in $\Re Z_{\beta,\omega}(E)$ and $\rk(E)\beta\omega$ in $\Im Z_{\beta,\omega}(E)$ and conclude that $\displaystyle\lim_{\lambda\rightarrow+\infty} \varphi_{\beta,\lambda\omega}(E)=0$. Then the phase of every positive rank subobject of $i_\ast F$ is eventually smaller than that of $i_\ast F$. To reach our conclusion we use \cite[Theorem 3.11]{Mac14} and conclude that for $\lambda>\lambda_0$ the stability condition $\sigma_{\beta,\lambda\omega}$ varies inside the same chamber.
\end{proof}

\begin{remark*}
Despite the real $\lambda_0$ of the previous result depends on $\omega$, the stability conditions obtained for $\lambda>\lambda_0$ must lie in the same chamber, we will call it \emph{Gieseker chamber}.
\end{remark*}

\subsubsection{} We will study separately some cases, depending on the remainder of the genus $g(C)$ when divided by 4, in each case the Mukai vector $v$ will be a primitive vector such that $v^2=0$, in this situation the moduli space $M_{X,H}(v)$ is a $\Kt$ surface, for generic $H$, as explained in \cite[Corollary 3.5]{Huy15}.

\subsection{The case $g\equiv 1 \pmod 4$}
\label{sect:cong1}

\subsubsection{} For the first case let us consider a primitive ample line bundle $H\in \NS(X)$. Suppose $C\in |H|$ is a projective curve of genus $g(C)=2s+1$ for some even integer $5 \leq s \in \N$. Assume, furtherly, that $X$ is not an hyperelliptic $\Kt$ surface of genus $\frac{s}2$.
Fix $v=(2,H,s)$, then $v^2=H^2-4s=2g(C)-2-4s=0$ and let $N\coloneqq M_{X,H}(v)$. Consider, then, the Brill-Noether locus $T\coloneqq B_C^{2+s}(2,4s)$. We will see that the restriction map $\psi\colon N\rightarrow T$ is well defined and injective, thus proving Theorem \ref{thm:BN1}.

\subsubsection{} Let $F\in T$ be a vector bundle; its pushforward $i_\ast F\in \Coh(X)$ has Mukai vector $\overline{v}\coloneqq v(i_\ast F)=(0, 2H, 0)$. 

\subsubsection{} Let $E\in N$. The exact sequence in $\Coh(X)$:
$$ 0\rightarrow E(-H) \rightarrow E \rightarrow i_\ast E|_C \rightarrow 0 \virgola $$
determines a triangle
$$ E\rightarrow i_\ast E|_C \rightarrow E(-H)[1] $$
in $\mathcal{D}(X)$.
We will study the stability of $E$ and $E(-H)$ with respect to stability conditions of the form $\sigma_{0,wH}$ for $w\in \R$ such that $w^2H^2>2$.

\begin{remark}
\label{subinab}
$E$ is $H-$stable of positive slope, hence it belongs to $\mathcal{T}(0,wH)$. Consider a subobject $F\hookrightarrow E$ in the abelian category $\mathcal{A}=\mathcal{A}(0,wH)$ and the corresponding exact sequence in $\mathcal{A}$:
$$ 0\rightarrow F \rightarrow E \rightarrow Q \rightarrow 0 \punto $$
This yields a long exact sequence in $\Coh(X)$:
$$ 0\rightarrow H^{-1}(Q)\rightarrow F \rightarrow E \rightarrow H^0(Q) \rightarrow 0 \virgola $$
with $H^{-1}(Q)\in \mathcal{F}$ and $H^0(Q)\in \mathcal{T}$.

Hence we may conclude that $F$ is an extension of $H^{-1}(Q)\in \mathcal{F}$ and a subsfeaf of $E$.

With a similar argument, one can infer that a quotient (in $\mathcal{A}$) of $E(-H)[1]\in \mathcal{F}[1]$ is $Q[1]$: the shift of $Q\in \mathcal{F}$. Moreover $Q$ is an extension of a quotient $Q_{-1}$ of $E$ (in $\Coh(X)$) and an object $Q_0\in \mathcal{T}$.
\end{remark}

\begin{lemma}
\label{lem:stabEA}
$E$ is $\sigma_{0,wH}-$stable $\forall\, \displaystyle w>\sqrt{\frac2{H^2}}$, unless one of the following cases happens:
\begin{enumerate}[label=(\alph*)]
\item There exists a line bundle $A_1\hookrightarrow E$ such that $A_1\cdot H= 2s-1$ and $A_1^2=s-2$;
\item There exists a line bundle $A_2\hookrightarrow E$ such that $A_2\cdot H= 2s-2$ and $A_2^2=s-2$; in this case either $(X,H)$ or $(X,H-A_2)$ would be a polarised hyperelliptic $\Kt$ surface of genus $g$, or $\displaystyle \frac{s}2+2$. 
\end{enumerate}


In both of the previous cases, $E$ is stable if $w^2H^2>s$ and it becomes semistable when $w^2H^2=s$.
\end{lemma}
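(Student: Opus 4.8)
The plan is to analyze when the $\sigma_{0,wH}$-stability of $E$ can fail, using the wall-chamber structure from the earlier proposition together with the description of subobjects in $\mathcal{A}(0,wH)$ given in Remark \ref{subinab}. Since $v(E)=(2,H,s)$ is primitive with $v(E)^2=0$ and $E$ is $H$-Gieseker stable, Lemma \ref{lem:pushingstab} (or rather its spirit, via the Gieseker chamber) and Lemma \ref{lem:shiftstab} tell us that $E$ is $\sigma_{0,wH}$-stable for $w\gg 0$. So I would look for the \emph{first} wall encountered as $w$ decreases from $+\infty$ towards $\sqrt{2/H^2}$. By Remark \ref{subinab}, any destabilizing subobject $F\hookrightarrow E$ in $\mathcal{A}$ arises from a genuine subsheaf of $E$ twisted by something in $\mathcal{F}(0,wH)$; since $E$ has rank $2$, the relevant destabilizing subobjects should be (saturations of) rank-one subsheaves, i.e. line bundles $A\hookrightarrow E$.

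\textbf{Reduction to line bundle subobjects.} First I would argue that a wall for $E$ must be produced by a subobject $F$ with $v(F)=(1,A,t)$ for some line bundle $A$ (the quotient then has Mukai vector $(1,H-A,s-t)$). The phases of $F$ and $E$ coincide on the wall, which forces $v(F)$ to lie in the plane spanned by $v(E)$ and $\ker Z_{0,wH}$. Writing out the condition that $F$ and $E$ have equal phase with respect to $\sigma_{0,wH}$ and that $F$ be $\sigma$-semistable, I would translate the equal-slope/equal-phase condition into a numerical relation between $A\cdot H$, $A^2$, and $w$. The central charge formula $Z_{\beta,\omega}$ from the introduction, specialized to $\beta=0$, $\omega=wH$, gives $Z_{0,wH}(F)= -t+\tfrac12 w^2H^2 + i\,(A\cdot H)\,w$, and similarly for $E$; equating phases (i.e. equating the ratios of imaginary to real parts) yields one equation relating $A\cdot H$ and $A^2$ to $w^2H^2$.

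\textbf{Extracting the numerical constraints.} The key constraints come from three sources: (i) the equal-phase equation above; (ii) the Hodge-index / Bogomolov-type inequality $A^2\,H^2\leq (A\cdot H)^2$, since $H$ is ample; and (iii) the fact that $A\hookrightarrow E$ with $E$ $H$-stable forces $\mu_H(A)<\mu_H(E)$, i.e. $A\cdot H < \tfrac12 H^2 = 2s$, so $A\cdot H\leq 2s-1$. Imposing that the wall lie in the allowed range $w^2H^2>2$ (equivalently $w^2H^2\geq s$ at the boundary, as the statement asserts) pins down $A\cdot H$ and $A^2$ to a short list. I expect the arithmetic — combining $v(E)^2=0$, $v(F)^2\geq -2$ (sphericality/semistability bound), and $v(F)\cdot v(E)$ governing the wall location — to force exactly $A\cdot H\in\{2s-1,2s-2\}$ with $A^2=s-2$, which are precisely cases (a) and (b). The boundary computation $w^2H^2=s$ for the semistability transition should fall out of setting the phase equality at the extreme admissible $w$.

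\textbf{Main obstacle and the remaining assertions.} The hardest part will be controlling the discrete possibilities and ruling out spurious walls: I must show that no other $(A\cdot H, A^2)$ pair gives a wall with $w^2H^2>2$, which requires carefully bounding the wall position via the discriminant-type quantity $\sqrt{(A\cdot H)^2-A^2H^2}$ and using primitivity of $H$ together with the Picard lattice structure. For case (b) I would interpret the subbundle geometrically: the existence of $A_2$ with $A_2\cdot H=2s-2$, $A_2^2=s-2$ produces a genus-$\tfrac{s}{2}+2$ or genus-$g$ pencil, and a Riemann–Roch/adjunction computation on $X$ identifies $(X,H)$ or $(X,H-A_2)$ as hyperelliptic in the $\Kt$ sense, contradicting our standing assumption unless we are exactly in this locus. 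Finally, for the claim that cases (a) and (b) cannot coexist when $s\neq 6$, I would assume both $A_1,A_2\hookrightarrow E$ simultaneously; their combined numerics (and the rank-$2$ constraint, which forces $A_1,A_2$ to interact inside $E$) yield a lattice relation that is only consistent when $s=6$, which I would check by a direct computation of the Gram matrix of $\{A_1,A_2,H\}$ and its signature against the Hodge-index theorem.
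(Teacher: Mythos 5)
There is a genuine gap in your first step, the ``reduction to line bundle subobjects,'' and it is not a repairable formality: it is false in the tilted heart. A destabilizing subobject $F\hookrightarrow E$ in $\mathcal{A}(0,wH)$ is, by Remark \ref{subinab}, an extension of an object $F_{-1}\in\mathcal{F}(0,wH)$ (torsion-free of non-positive $H$-slope, but of \emph{arbitrary} rank) by a subsheaf $F_0\subseteq E$; hence $\rk F=\rk F_{-1}+\rk F_0$ is unbounded, and the rank-$2$ hypothesis on $E$ gives you no a priori control. The paper's proof accordingly takes $v(F)=(r_0,\Delta_0,s_0)$ with $r_0$ arbitrary, writes $\Delta_0=c_0H+\Omega$ with $c_0=\tfrac{k}{4s}$ and $\Omega\in H^\perp$, and derives from the equal-phase condition together with $v(F)^2\geq -2$ the pincer $2c_0^2s<r_0s_0<2c_0^2s+1$, which combined with the congruences $k\equiv -a\pmod 4$, $k\equiv -b\pmod{2s}$ reduces to a short list of candidates. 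Crucially, one of the surviving candidates has $r_0=2$ (namely $v(F)=(2,\Delta_0,s)$ with $\Delta_0\cdot H=4s-1$): it passes all the numerical tests you list (equal phase, Hodge index, slope bound) and is eliminated only by the separate lattice-theoretic Lemma \ref{lem:noellint1}, i.e.\ the non-existence of $D\in\NS(X)$ with $D^2=0$ and $D\cdot H=1$, proved via a base-point-free pencil cutting a $g^1_1$. Your scheme, having restricted attention to $v(F)=(1,A,t)$ from the outset, would never see this wall candidate and therefore cannot establish the ``unless'' dichotomy as stated; nor would it see (in order to exclude them) the higher-rank possibilities that the paper kills with the $r_0s_0$ pincer rather than with any rank heuristic.

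Two smaller points. First, your source for stability at $w\gg 0$ is misattributed: Lemma \ref{lem:pushingstab} concerns pushforwards $i_\ast F$ of sheaves on $C$ (rank zero on $X$), and Lemma \ref{lem:shiftstab} applies to $E[1]$ with $\beta_0\in\nicefrac{\Delta}{r}+H^\perp$, which for $v(E)=(2,H,s)$ excludes $\beta=0$; the correct reference (used in the paper) is the large-volume-limit statement, Proposition 14.2 of \cite{Bri08}. Second, your sketch for the $s\neq 6$ claim is in the right spirit but the operative mechanism is not the signature of a Gram matrix: the paper sets $D=A_1-A_2=\tfrac{1}{4s}H+\Omega_1-\Omega_2$ and uses \emph{evenness} of the intersection form on $\NS(X)$ together with the Cauchy--Schwarz bound $\lvert\Omega_1\cdot\Omega_2\rvert^2\leq\lvert\Omega_1^2\rvert\cdot\lvert\Omega_2^2\rvert$ on the negative definite lattice $H^\perp$ to show $D^2=-1-\tfrac1s-2\Omega_1\Omega_2$ cannot be an even integer once $s\geq 8$; that quantitative bound, not the Hodge index theorem, is what isolates $s=6$.
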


\begin{remark*}
Requiring that $A_i$ as in (a) or (b) is a subsheaf of $E$ is redundant. In the proof of the proposition we will compute the Mukai vector of $A_i$ and we will notice that the Mukai pairing $-\hom_X(A_i,E)+\ext^1_X(A_i,E)-\hom_X(E,A_i)=\langle v(E), v(A_i) \rangle$ is negative and moreover $\hom(E,A_i)=0$ because $\mu_H(A_i)<\mu_H(E)$, therefore we deduce that $\hom_X(A,E)>0$.
\end{remark*}

\begin{proof}
By \cite[Proposition 14.2]{Bri08} we know that there are $w\in \R$ arbitrairily large such that $E$ is $\sigma_{0,wH}-$stable. Now suppose, by contradiction, that the statement is false. This means that there is $\displaystyle w_0>\sqrt{\frac2{H^2}}$ such that $E$ is $\sigma_{0,w_0H}-$semistable but not stable. Let $F\hookrightarrow E$ a stable subobject of $E$ (in $\mathcal{A}$) of the same phase of $E$ and let $v(F)=(r_0, \Delta_0, s_0)$ be its Mukai vector. Consider the decomposition $\Delta_0 = c_0H+\Omega$, with $\displaystyle c_0=\frac{k}{4s}\in \frac{\Z}{H^2}$ and $\Omega\in H^\bot$.

Now, $F$ is extension of a sheaf $F_{-1}\in \mathcal{F}$ and a subsheaf $F_0$ of $E$, therefore we have $\mu_H(F_{-1})\leq 0$ (or $F_{-1}=0$) and $\mu_H(F_0)<\mu_H(E)=\frac{H^2}{2}$ (or $F_0\in\{0, E\}$); note also that $F$ must be a proper subobject of $E$, therefore at least one among $F_{-1}$ and $F_0$ must be nontrivial (here we consider both $0$ and $E$ as trivial subobjects of $E$). Since $\frac{c_0H^2}{r_0}=\mu_H(F)\leq\max\{\mu_H(F_{-1}),\mu_H(F_0)\}$ the inequality $2c_0< r_0$ holds, moreover we have $c_0<1$.

The two complex numbers
$$ Z(E)=-s+w_0^2H^2 + i w H^2 $$
and
$$ Z(F)=-s_0+r_0\frac{w_0^2H^2}2 + i wc_0H^2  $$
have the same phase, in particular $c_0>0$, otherwise $F\notin \mathcal{P}(\varphi(E))$, therefore
$$ -s_0+r_0\frac{w_0^2H^2}2=c_0\left({-s+w_0^2H^2}\right) $$
which means that $\displaystyle s_0-c_0s=\frac{w_0^2H^2}{2}(r_0-2c_0)>0$ and in particular $s_0r_0>4c_0^2s$.

Moreover $F$ is stable, this means that $v(F)^2\geq-2$, therefore
$$ c_0^2H^2+\Omega^2-2r_0s_0\geq -2 \virgola $$
which leads to the inequality $r_0s_0\leq \frac12c_0^2H^2+1+\frac12\Omega^2< \frac12c_0^2H^2+1=2c_0^2s+1$.

Since $2c_0^2s<r_0s_0<2c_0^2s+1$, we have that $r_0$ must be the smallest integer strictly greater than $2c_0=\frac{k}{2s}$ and $s_0$ the smallest integer greater than $c_0s=\frac{k}4$. We can say, then, that $\displaystyle s_0=\frac{k+a}4$ and $\displaystyle r_0=\frac{k+b}{2s}$ for some integers $1\leq a \leq 4$ and $1\leq b \leq 2s$. It follows that 
\begin{equation}
\label{ineq:destabA}
 r_0s_0=\frac{k^2+(a+b)k+ab}{8s}<\frac{k^2+8s}{8s}=2c_0^2s+1 \punto 
\end{equation}

It follows that $E$ can be destabilized only if there are integers $1\leq a\leq 4, 1\leq b\leq 2s, 1\leq k \leq 4s$ satisfying the inequality \eqref{ineq:destabA} above and such that
$$
\left\{ {
\begin{array}{ll}
k\equiv -a  & \pmod 4 \\
k\equiv -b  & \pmod{2s} \punto
\end{array}
} \right.
$$

Either $k=2s-b$ or $k=4s-b$. In the second case, inequality \eqref{ineq:destabA} becomes
$$ b^2-4bs+4(2-a)s >0 \punto $$
Now, if $2\leq b\leq 2s$ the above inequality cannot be satisfied, since $b^2-4bs+4s<0$.
For $b=1$ we have also $a=1$ and the inequality is satisfied. In this case we would have $r_0=2$, $s_0=s$ and $4sc_0=\Delta_0\cdot H=4s-1$. Therefore $v(F)^2=\frac{16s^2-8s+1}{4s}+\Omega^2-4s=-2+\frac1{4s}+\Omega^2$, which means $\Omega^2=-\frac1{4s}$; the divisor $D=H-\Delta_0$ is such that $D^2=0$ and $D\cdot H=1$, the forthcoming Lemma \ref{lem:noellint1} shows that there are no such divisors in $\NS(X)$, thus there is no such an object destabilizing $E$.

If $k=2s-b$, the inequality becomes
$$ b^2-2bs+2(4-a)s>0 $$
Since we have that $s_0-c_0s=\frac{w_0^2H^2}2(r_0-2c_0)>r_0-2c_0$ (and so $\frac{a}4>\frac{b}s$), $b$ cannot be equal to $2s-1,2s-2,2s-3$. The only possible solution can be $b=1,2$ (and $a=1,2$ respectively).

For $b=1$ we must have $\displaystyle v(F)=\left({ 1, \frac{2s-1}{4s}H+\Omega_1, \frac{s}2 }\right)$, moreover
$$ v(F)^2=\frac{4s^2-4s+1}{4s} +\Omega_1^2 -s = -1+\frac{1}{4s} +\Omega_1^2 $$
which necessairly means that $\Omega_1^2=-1-\frac1{4s}$ and the case (a) is reached.

For $b=2$ we have $\displaystyle v(F)=\left({ 1, \frac{s-1}{2s}H+\Omega_2, \frac{s}2 }\right)$, moreover
$$ v(F)^2=\frac{s^2-2s+1}{s} +\Omega_2^2 -s = -2+\frac{1}{s} +\Omega_2^2 $$
which necessairly means that $\Omega_2^2=-\frac1{s}$ and we get the case (b). Let us call
$$ A_2=\frac{s-1}{2s}H+\Omega_2 $$
we have that $H-2A_2$ is an elliptic curve and that $H-A_2$ is an ample divisor, the reason for the latter statement is that $H$ is ample, so for any effective divisor $\Gamma$ we have either $\Gamma \cdot A_2\geq 0$ or $\Gamma \cdot (H-A_2)>0$, moreover $H-2A_2$ is nef, so $\Gamma\cdot (H-A_2)\geq \Gamma \cdot A_2$ and so $\forall \Gamma$ effective divisor we have that $\Gamma \cdot (H-A_2)>0$; since $(H-A_2)\cdot (H-2A_2)=2$, we infer that smooth curves in $H-A_2$ are hyperelliptic curves, hence $X$ is an hyperelliptic $\Kt$ surface with respect to the polarization $H-A_2$ (of genus $\nicefrac{s}{2}+2$), see \cite{Rei76} for more details.

In both these cases we would have $w_0^2H^2=s$.

\end{proof}

\begin{lemma}
\label{lem:noellint1}
There is no element $D\in \NS(X)$ with $D^2=0$ and $D\cdot H=1$.
\end{lemma}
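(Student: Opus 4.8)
The plan is to derive a contradiction from the existence of such a $D$ by showing that $|D|$ would be an elliptic pencil meeting the positive-genus curve $C$ in a single point, thereby exhibiting $C$ as a rational curve. Throughout I would only use the ampleness of $H$ and the fact that $g(C)=2s+1>0$; the hyperelliptic hypothesis on $(X,H)$ plays no role here.

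First I would check that $D$ is effective. Since $X$ is a $\Kt$ surface, Riemann--Roch gives $\chi(D)=2+\tfrac{D^2}{2}=2$, and Serre duality identifies $h^2(D)=h^0(-D)$. As $H$ is ample and $D\cdot H=1>0$, the class $-D$ is not effective, so $h^0(-D)=0$ and hence $h^0(D)\geq 2$; in particular $D$ moves in at least a pencil, and $D$ is primitive because $D\cdot H=1$. Next I would pin down the geometry of $D$: writing the effective divisor as $D=\sum_i n_i C_i$ with the $C_i$ distinct irreducible curves and $n_i\geq 1$, ampleness forces $C_i\cdot H\geq 1$ for every $i$, so from $1=D\cdot H=\sum_i n_i\,(C_i\cdot H)$ I conclude that there is a single reduced component. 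Thus $D$ is an irreducible curve with $D^2=0$ and $D\cdot H=1$. Such a curve is nef (an irreducible curve with non-negative self-intersection) and, by adjunction, has arithmetic genus $p_a(D)=1+\tfrac{D^2}{2}=1$. At this point I would invoke the classical description (Saint-Donat) of primitive nef classes of square zero on a $\Kt$ surface: $|D|$ is base-point free and cuts out an elliptic fibration $\pi\colon X\to \mathbb{P}^1$ whose fibers lie in the class $D$.

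Finally I would restrict this fibration to $C\in|H|$. Since $\pi^\ast\mathcal{O}_{\mathbb{P}^1}(1)=\mathcal{O}_X(D)$, restriction gives $(\pi|_C)^\ast\mathcal{O}_{\mathbb{P}^1}(1)=\mathcal{O}_C(D|_C)$, a line bundle of degree $D\cdot C=D\cdot H=1$. Hence $\pi|_C\colon C\to\mathbb{P}^1$ is a degree-one morphism from the smooth curve $C$, i.e. an isomorphism, forcing $g(C)=0$ and contradicting $g(C)=2s+1\geq 13$. I expect the only genuinely delicate point to be the passage from ``$D$ effective with $D^2=0$'' to ``$|D|$ is a base-point-free elliptic pencil''; the reduction to a single irreducible curve via the ampleness of $H$ is what makes this step clean, after which Saint-Donat's result applies verbatim and the remaining degree computation is purely formal.
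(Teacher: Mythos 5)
Your proof is correct and follows essentially the same route as the paper: effectivity of $D$ via Riemann--Roch, Serre duality and ampleness of $H$; irreducibility and reducedness of members of $|D|$ from $D\cdot H=1$; and a contradiction from the resulting degree-one pencil on the positive-genus curve $C$. The only cosmetic difference is that you invoke Saint-Donat for base-point-freeness, whereas the paper gets it elementarily by noting that two distinct irreducible members of $|D|$ share no component and, since $D^2=0$, are disjoint.
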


\begin{proof}
Assume there is a divisor $D$ with that property.
Since $H$ is ample and $(-D)\cdot H=-1$, we have that $-D$ is not effective and $H^0(X, -D)=H^2(X, D)=0$. So $h^0(D)=h^1(D)+2\geq 2$ and $D$ is effective.

Any curve in $|D|$ must be irreducible and reduced since its intersection with the ample line bundle $H$ is equal to $1$; therefore, two different curves in $|D|$ have empty intersection because $D^2=0$. It follows that $|D|$ is a base point free pencil, hence it cuts a $g^1_1$ on $C$, which is impossible.
\end{proof}

\subsubsection{} Now let us study the stability of $E(-H)[1]$. We have that $\ch_1(E(-H))=\ch_1(E)-\rk(E)H=-H$ and that $\ch_2(E(-H))=\ch_2(E)-\ch_1(E)\cdot H + \frac12\rk(E)H^2=\ch_2(E)$, thus the Mukai vector of its shift is
$$ v(E(-H)[1])=(-2, H, -s) \punto $$
Now a similar argument as the one in Lemma \ref{lem:stabEA} leads to a similar statement for $E(-H)[1]$.

\begin{lemma}
$E(-H)$ is $\sigma_{0,wH}-$stable whenever $w^2H^2>2$, with the same exceptions as in Lemma \ref{lem:stabEA}.
\end{lemma}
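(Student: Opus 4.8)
The plan is to reduce the statement to Lemma \ref{lem:stabEA} by means of the derived duality functor, exploiting the fact that for a rank-two bundle the dual coincides with the twist by the inverse of the determinant. Here ``$E(-H)$ is $\sigma_{0,wH}$-stable'' is to be read as stability of its shift $E(-H)[1]\in\mathcal{A}(0,wH)$, whose Mukai vector $(-2,H,-s)$ was just computed. First I would check that every $E\in N$ is locally free: otherwise the inclusion of $E$ into its reflexive hull $E^{\vee\vee}$ (locally free on the smooth surface $X$) has torsion cokernel of some length $\ell>0$ supported on points, so that $v(E^{\vee\vee})=(2,H,s+\ell)$ and $v(E^{\vee\vee})^2=H^2-4(s+\ell)=-4\ell$. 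Since $\mu_H$-stability of $E$ forces $\mu_H$-stability of $E^{\vee\vee}$ (a subsheaf of $E^{\vee\vee}$ meets $E$ in a subsheaf of strictly smaller slope and differs from it only by a codimension-two torsion sheaf), the bundle $E^{\vee\vee}$ is simple, whence $v(E^{\vee\vee})^2\geq -2$, a contradiction. Thus $E$ is a rank-two locally free sheaf with $\det E=\mathcal{O}_X(H)$, and the canonical isomorphism $E^\vee\cong E\otimes(\det E)^{-1}$ gives $E^\vee\cong E(-H)$. In particular $E(-H)[1]\cong\mathbb{D}(E)[1]$, where $\mathbb{D}=R\mathcal{H}om(-,\mathcal{O}_X)$.

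Next I would transport stability through $\mathbb{D}$. Since $\mathbb{D}$ is an anti-autoequivalence of $\mathcal{D}(X)$ and $v(\mathbb{D}F)=(r,-\Delta,s)$ for $v(F)=(r,\Delta,s)$, the central charge at $\beta=0$ satisfies $Z_{0,wH}(\mathbb{D}F)=\overline{Z_{0,wH}(F)}$; this is the reflection already visible in the identity $Z_{0,wH}(E(-H)[1])=-\overline{Z_{0,wH}(E)}$. Complex conjugation of the central charge, together with the contravariance of $\mathbb{D}$ (which turns sub-objects into quotients) and a shift, sends $\sigma_{0,wH}$-semistable objects to $\sigma_{0,wH}$-semistable objects: concretely $\mathbb{D}$ interchanges the torsion pair $(\mathcal{T},\mathcal{F})$ defining $\mathcal{A}(0,wH)$ up to shift, so that $\mathbb{D}(\mathcal{A}(0,wH))=\mathcal{A}(0,wH)[1]$, and a Jordan--Hölder filtration of $F$ dualizes to one of $\mathbb{D}(F)[1]$ with phases reflected by $\phi\mapsto 1-\phi$. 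Because the defining relation $Z(F')/Z(E)\in\mathbb{R}_{>0}$ of a wall is invariant under conjugation, the wall structure of $E$ is carried onto that of $E(-H)[1]$. Hence Lemma \ref{lem:stabEA}, which gives that $E$ is $\sigma_{0,wH}$-stable for $w^2H^2>2$ outside cases (a),(b) and for $w^2H^2>s$ inside them, yields the same bounds verbatim for $E(-H)[1]$.

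Finally I would match the exceptional loci and flag the main difficulty. A destabilizing sub-line-bundle $A_i\hookrightarrow E$ as in (a) or (b) dualizes to a line-bundle quotient $E(-H)=E^\vee\twoheadrightarrow\mathcal{O}_X(-A_i)$, whose kernel is the sub-line-bundle $\mathcal{O}_X(A_i-H)$ of $E(-H)$ (as $\det E(-H)=\mathcal{O}_X(-H)$); since the wall produced by a sub-object and that produced by the complementary quotient coincide, these are precisely the same two exceptions, occurring at the same critical value $w^2H^2=s$. The main obstacle is exactly the compatibility asserted in the middle step: one must verify that $\mathbb{D}$ is t-exact for the tilted heart up to shift, i.e. that $\mathbb{D}(\mathcal{A}(0,wH))=\mathcal{A}(0,wH)[1]$, so that destabilizing sub-objects of $E(-H)[1]$ genuinely correspond to destabilizing quotients of $E$. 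If one prefers to bypass this abstract duality, the identical conclusion can be reached by repeating the numerical destabilization analysis of Lemma \ref{lem:stabEA} for the Mukai vector $(-2,H,-s)$, replacing sub-objects by quotient objects, all the inequalities being unchanged under the reflection $Z\mapsto-\overline{Z}$.
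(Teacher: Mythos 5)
Your proposal is correct in substance, and its primary route is genuinely different from the paper's. The paper dispatches the lemma in three lines: since $v(E(-H)[1])=(-2,H,-s)$ has the same coefficients as $v(E)$ up to sign, the numerical wall analysis of Lemma \ref{lem:stabEA} applies verbatim, and destabilizing sequences correspond under the covariant operation $F\mapsto F(-H)[1]$, i.e.\ $F\hookrightarrow E\twoheadrightarrow Q$ destabilizes $E$ if and only if $F(-H)[1]\hookrightarrow E(-H)[1]\twoheadrightarrow Q(-H)[1]$ destabilizes $E(-H)[1]$. Your fallback argument --- redoing the analysis for the vector $(-2,H,-s)$ with subobjects replaced by quotients, all inequalities invariant under $Z\mapsto-\overline{Z}$ --- is exactly this, so your proposal is complete even with the duality step discarded. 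The duality route itself, via $E^\vee\cong E(-H)$, is conceptually cleaner at $\beta=0$, where conjugation of the central charge is a symmetry of $\sigma_{0,wH}$ itself; by contrast the paper's twist $-\otimes\mathcal{O}_X(-H)$ a priori moves $\sigma_{0,wH}$ to $\sigma_{-H,wH}$, and the paper leans, without saying so, on the same numerical coincidence you make explicit. Your preliminary local-freeness check is also correct ($v(E^{\vee\vee})^2=-4\ell<-2$ contradicts simplicity of the $\mu_H$-stable reflexive hull), though the paper sidesteps it by assuming in Theorem \ref{thm:injA} that $E$ is a vector bundle; and your matching of the exceptional cases is right, since the wall cut out by a subobject and by its complementary quotient is the same, so (a) and (b) recur at the same critical value $w^2H^2=s$.

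One intermediate claim is false as literally stated, and you correctly flagged it as the crux: the heart equality $R\mathcal{H}om(\mathcal{A}(0,wH),\mathcal{O}_X)=\mathcal{A}(0,wH)[-1]$ fails. A skyscraper $\mathcal{O}_x$ lies in $\mathcal{T}(0,wH)\subseteq\mathcal{A}(0,wH)$, but $R\mathcal{H}om(\mathcal{O}_x,\mathcal{O}_X)[1]=\mathcal{O}_x[-1]$ is not in the heart; likewise duals of non-locally-free sheaves in $\mathcal{F}(0,wH)$ acquire zero-dimensional $\mathcal{E}xt^1$-contributions in the wrong degree, so the dual does not interchange the torsion pair. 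The correct --- and for your purposes sufficient --- statement is weaker: the contravariant functor $R\mathcal{H}om(-,\mathcal{O}_X)[1]$ carries $\sigma_{\beta,\omega}$-semistable objects of phase $\varphi$ to $\sigma_{-\beta,\omega}$-semistable objects of phase $1-\varphi$ (a standard fact, in the spirit of the duality exercises in \cite{MS16}, proved by dualizing Harder--Narasimhan and Jordan--H\"older filtrations rather than hearts), which at $\beta=0$ returns to the same stability condition. Replacing the heart equality by this statement --- or simply by your own fallback, which is the paper's proof --- makes the argument sound.
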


\subsubsection{} The proof is actually the same as in the previous lemma: the coefficients of the two Mukai vectors are the same, up to a sign. Actually it turns out that if $F\hookrightarrow E \twoheadrightarrow Q$ is a destabilizing sequence for $E$, then $F(-H)[1]\hookrightarrow E(-H)[1]\twoheadrightarrow Q(-H)[1]$ is a destabilizing sequence for $E(-H)[1]$ (in $\mathcal{A}$), vice versa a destabilizing sequence for $E(-H)[1]$ induces a destabilizing senquence for $E$.

\subsubsection{} The next theorem shows that the restriction $i^\ast E$ of a vector bundle $E\in N$ belongs to $T$, therefore the map $\psi\colon N\rightarrow T$ is well defined.

\begin{theorem}
\label{thm:injA}
Let $E\in M_{X,H}(v)$ be a $\mu_H-$stable vector bundle. Suppose neither (a) nor (b) from Lemma \ref{lem:stabEA} hold. Then
\begin{enumerate}[label=(\alph*)]
\item The restriction $E|_{C}$ is a slope stable vector bundle on $C$ and $h^0(C,E|_C)\geq 2+s$.
\item $\Hom_X(E,E(-H)[1])=0$.
\end{enumerate}

In particular the restriction map $N\rightarrow T$ is well defined and injective.
\end{theorem}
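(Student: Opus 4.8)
The whole argument will revolve around the short exact sequence
$$ 0 \rightarrow E \rightarrow i_\ast E|_C \rightarrow E(-H)[1] \rightarrow 0 $$
in the heart $\mathcal{A}=\mathcal{A}(0,wH)$ obtained by rotating the restriction triangle: indeed $E\in\mathcal{T}$ (it is $\mu_H$-stable of positive slope), $E(-H)[1]\in\mathcal{F}[1]$ (as $E(-H)$ is $\mu_H$-stable of slope $-2s<0$), and $i_\ast E|_C$ is torsion, hence in $\mathcal{A}$. Writing $H^2=2g(C)-2=4s$, along the ray $\beta=0$, $\omega=wH$ the central charges read
$$ Z_{0,wH}(E)=(-s+w^2H^2)+iwH^2 \virgola \qquad Z_{0,wH}(E(-H)[1])=(s-w^2H^2)+iwH^2 \punto $$
Both have positive imaginary part and opposite real parts; this single observation drives both (a) and (b).

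For the slope-stability in (a) I specialise to the value $w_0$ with $w_0^2H^2=s$, where the two real parts vanish, so $E$ and $E(-H)[1]$ both acquire phase $\frac12$. Since $s>2$ we have $w_0^2H^2=s>2$, so Lemma \ref{lem:stabEA} and its analogue for $E(-H)[1]$ — and here the hypothesis that neither (a) nor (b) holds is indispensable — guarantee that both objects are genuinely $\sigma_{0,w_0H}$-stable. Then $i_\ast E|_C$, an extension of two stable objects of equal phase, is $\sigma_{0,w_0H}$-semistable, and the converse half of Lemma \ref{lem:pushingstab} gives that $E|_C$ is slope-stable. For the count of sections I pass to $X$ and use Riemann--Roch: from $v(E)=(2,H,s)$ one reads $\chi(X,E)=2+s$, while $h^2(X,E)=\hom_X(E,\mathcal{O}_X)=0$ because $E$ is $\mu_H$-stable with $\mu_H(E)=2s>0$; hence $h^0(X,E)\geq 2+s$. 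Since $\mu_H(\mathcal{O}_X(H))=4s>2s=\mu_H(E)$, stability forces $H^0(X,E(-H))=\Hom_X(\mathcal{O}_X(H),E)=0$, so the restriction sequence embeds $H^0(X,E)\hookrightarrow H^0(C,E|_C)$ and $h^0(C,E|_C)\geq 2+s$.

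Part (b) is then pure phase bookkeeping. For any $w$ with $2<w^2H^2<s$ (such $w$ exist because $s>2$) the real part $-s+w^2H^2$ of $Z_{0,wH}(E)$ is strictly negative and that of $Z_{0,wH}(E(-H)[1])$ strictly positive, so $\varphi_{0,wH}(E)>\frac12>\varphi_{0,wH}(E(-H)[1])$, while both objects are $\sigma_{0,wH}$-stable throughout this range. A morphism from a stable object of higher phase to one of lower phase vanishes, giving $\Hom_X(E,E(-H)[1])=0$. The identical computation shows more generally that $\Hom_X(E,E'(-H)[1])=0$ for any two bundles $E,E'$ lying in the locus where neither (a) nor (b) holds.

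This last remark yields injectivity. If $E,E'\in N$ satisfy $E|_C\cong E'|_C=F$, denote by $r\colon E\to i_\ast F$ and $r'\colon E'\to i_\ast F$ the (nonzero) restriction morphisms, and complete the second into the triangle $E'\rightarrow i_\ast F\rightarrow E'(-H)[1]$, whose first arrow is $r'$ and whose second arrow we call $q'$. Applying $\Hom_X(E,-)$, the composite $q'\circ r$ lies in $\Hom_X(E,E'(-H)[1])=0$, so $r=r'\circ f$ for some $f\colon E\to E'$; symmetrically $r'=r\circ g$. Then $r=r\circ(gf)$ with $gf\in\Hom_X(E,E)=\C$ (stability makes $E$ simple), and $r\neq0$ forces $gf=\mathrm{id}_E$; thus $f$ is a nonzero morphism between stable objects of the same phase with equal Mukai vectors, hence an isomorphism, and $E\cong E'$. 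I expect the real obstacle to be none of these formal steps but the stability input on which they rest: one must know that, after discarding cases (a) and (b), both $E$ and $E(-H)[1]$ stay stable exactly at and on both sides of the value $w^2H^2=s$ — at $w^2H^2=s$ to make $i_\ast E|_C$ semistable (hence $E|_C$ slope-stable), and strictly inside $2<w^2H^2<s$ to separate the phases (hence kill the relevant $\Hom$). This is precisely what Lemma \ref{lem:stabEA} controls, which is why its exceptional loci must be excluded.
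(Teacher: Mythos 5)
Your proof of part (b), the section count, and the injectivity step are all sound, and in places genuinely lighter than the paper's route: for the bound $h^0(C,E|_C)\geq s+2$ you use Riemann--Roch plus $H^0(X,E(-H))=0$ (the paper uses exactly this shortcut later, in Proposition \ref{prop:exceptionA}, but in Theorem \ref{thm:injA} it proves the sharper equality $h^0=s+2$ via the wall estimate of Lemma \ref{lem:wallglosect} and the analysis of Remark \ref{rem:OXwall}, machinery you avoid since the statement only claims an inequality); for injectivity you replace the paper's appeal to uniqueness of Jordan--H\"older factors of $i_\ast E|_C$ by a factorization argument through $\Hom_X(E,E'(-H)[1])=0$, which is correct and self-contained, granted that $E'$ also avoids cases (a) and (b) so that $E'(-H)[1]$ is stable in the range $2<w^2H^2<s$.

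The genuine gap is in the slope-stability claim of (a). At $w_0$ with $w_0^2H^2=s$ the object $i_\ast E|_C$ is \emph{strictly} semistable --- it is an extension of the two non-isomorphic stable objects $E$ and $E(-H)[1]$ of the same phase, so it cannot be $\sigma_{0,w_0H}$-stable --- and Bridgeland semistability of $i_\ast F$ only yields slope-\emph{semi}stability of $F$. (The literal wording of the converse half of Lemma \ref{lem:pushingstab}, ``then $F$ is slope-stable'', on which you lean, cannot be meant as written: combined with the first half of that lemma it would prove that every slope-semistable bundle on $C$ is slope-stable. The parenthetical ``(semi)'' must be read in parallel on both sides.) This is precisely why the paper inserts Lemma \ref{lem:stabpf}: one moves off the wall to $w\in(\tilde{w},\tilde{w}+\epsilon)$, where local finiteness of walls and constancy of the JH filtration show that any maximal destabilizer of $i_\ast E|_C$ would have to degenerate at $\tilde{w}$ to a subobject with JH factors among $\{E,E(-H)[1]\}$; since $E(-H)[1]$ is not a subobject of $i_\ast E|_C$ and the phase of $E$ drops strictly below that of $i_\ast E|_C$ for $w>\tilde{w}$, the sheaf $i_\ast E|_C$ is genuinely stable there (and then for all larger $w$, every subobject having positive rank), and only then does Lemma \ref{lem:pushingstab} give slope-stability of $E|_C$. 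As written, your argument establishes only slope-semistability, which is not enough for the map to land in $T=B_C^{s+2}(2,4s)$, defined via slope-stable sheaves; the off-the-wall step is the missing idea.
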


\begin{proof}
To prove (b) it is enough to notice that at the point $\tilde{\sigma}=\sigma_{0,\tilde{w}H}$ with $\tilde{w}^2H^2=s$, both $E$ and $E(-H)[1]$ are stable of the same phase, thus we get the conclusion.

Now let us prove (a). Consider the triangle:
$$ E\rightarrow i_\ast E|_C \rightarrow E(-H)[1] \virgola$$
since $E,E(-H)[1]$ are $\tilde{\sigma}-$stable of the same phase, we get that $i_\ast E|_C$ is semistable of that phase ($\mathcal{P}(\varphi)$ is an abelian category) and $E, E(-H)[1]$ are its JH factors. According to Lemma \ref{lem:stabpf} $i_\ast E|_C$ is $\sigma_{0,wH}-$stable for $w\in (\tilde{w},\tilde{w}+\epsilon)$. Any subobject of $i_\ast (E|_C)$ in $\mathcal{A}(0,wH)$ has positive rank, so its $\sigma_{0,wH}-$phases decreases as $w$ increase, therefore $i_\ast E|_C$ is stable for all $w>\tilde{w}$. In particular, this means that $E|_C$ is slope-stable: see Lemma \ref{lem:pushingstab}.

Now apply the functor $\Hom_X(\mathcal{O}_X, \cdot)$ to the triangle above and get the long exact sequence
$$\Hom_X(\mathcal{O}_X, E(-H))\rightarrow \Hom_X(\mathcal{O}_X, E) \rightarrow \Hom_X(\mathcal{O}_X, i_\ast E|_C) \rightarrow \Hom_X(\mathcal{O}_X, E(-H)[1]) \punto $$

Both $E$ and $E(-H)[1]$ are $\sigma_{0,wH}-$stable for $\displaystyle w\rightarrow \sqrt{\frac{2}{H^2}}$. Consider the $(a,b)-$plane relative to the two divisors $\beta_0=0$ and $\omega_0=H$, i.e. consider stability conditions of the form $\sigma_{bH, aH}$. It follows from  \cite[Proposition 6.22]{MS16} (in particular from assertion (7)) that there are no walls in such $(a,b)-$plane for $E$ (resp. $E(-H)[1]$) above the semicircle given by the equation $\varphi(\mathcal{O}_X)=\varphi(E)$ (resp. $\varphi(\mathcal{O}_X)=\varphi(E(-H)[1])$), hence that semicircle lie inside, or is a wall of, a chamber where $E$ (resp. $E(-H)[1]$) is stable; in the first case an open subset of the numerical wall $\mathcal{W}=\{\varphi(\mathcal{O}_X)=\varphi(E)\}$ is contained in a chamber where $E$ (resp. $E(-H)[1]$) is stable; in the second case, as explained in following \emph{Remark} \ref{rem:OXwall}, an open subset of the numerical wall $\mathcal{W}$ is adjacent to the chamber we are considering. Thus we can apply Lemma \ref{lem:wallglosect} and deduce that
$$ h^0(X,E)\leq \frac{s+2}2+\frac{\sqrt{(s-2)^2+4s+4}}2=\frac{s+2}2+\frac{\sqrt{(s+2)^2+4}}2=s+2 + \delta $$
and that
$$ h^0(X,E(-H)[1])\leq -\frac{s+2}2+\frac{\sqrt{(s-2)^2+4s+4}}2=-\frac{s+2}2+\frac{\sqrt{(s+2)^2+4}}2= \delta \virgola $$
for some $0<\delta<1$; hence we infer that $E$ has at most $s+2$ global sections and $E(-H)[1]$ has no global section.

Now we have that $2+s=\chi(E)=h^0(X,E)-h^1(X,E)+h^2(X,E)$, but $h^2(X,E)=\hom_X(E,\mathcal{O}_X)=0$ because $E$ is $\mu_H-$stable and $\mu_H(E)>0$. It follows that $h^0(X,E)= s+2$.

But $\mathcal{O}_X$ is $\sigma_{0,\tilde{w}H}-$stable of phase 0, while $E(-H)$ is stable of phases $\varphi<0$, hence $\Hom_X(\mathcal{O}_X,E(-H))=0$; by replacing these elemnets in the long exact sequence above we get:
$$0\rightarrow \Hom_X(\mathcal{O}_X, E) \rightarrow \Hom_X(\mathcal{O}_X, i_\ast E|_C) \rightarrow 0 \virgola $$
thus $h^0(C, i_\ast E|_C)=s+2$.

The injectivity of the restriction map easily follow from the uniqueness of the JH factors of $i_\ast E|_C$: if there were $E,E'$ such that $E|_C=E'|_C$, then $i_\ast E|_C$ would have a JH filtration with factors $E, E(-H)[1]$ and a filtration with factors $E', E'(-H)[1]$, hence $E\cong E'$.
\end{proof}

\subsubsection{} This conclude the proof of Theorem \ref{thm:BN1}.

\begin{lemma}
\label{lem:stabpf}
There exists $\epsilon>0$ such that for $\tilde{w}<w<\tilde{w}+\epsilon$ we have that $i_\ast E|_C$ is $\sigma_{0,wH}-$stable.
\end{lemma}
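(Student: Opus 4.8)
The plan is to treat $\tilde\sigma=\sigma_{0,\tilde wH}$ (where $\tilde w^2H^2=s$) as a genuine wall for $i_\ast E|_C$ and to read off, from the behaviour of the two Jordan--Hölder factors across this wall, that $i_\ast E|_C$ becomes stable on the side $w>\tilde w$. First I would record the relevant central charges. Since $v(i_\ast E|_C)=(0,2H,0)$, a direct computation gives $Z_{0,wH}(i_\ast E|_C)=2iwH^2$, so $i_\ast E|_C\in\mathcal{A}(0,wH)$ has phase exactly $\frac12$ for every $w$. On the other hand $Z_{0,wH}(E)=(w^2H^2-s)+iwH^2$ and $Z_{0,wH}(E(-H)[1])=(s-w^2H^2)+iwH^2$, so at $w=\tilde w$ both factors have phase $\frac12$, while for $w>\tilde w$ one has $\varphi_{0,wH}(E)<\frac12<\varphi_{0,wH}(E(-H)[1])$, the two inequalities reversing for $w<\tilde w$. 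By Lemma \ref{lem:stabEA} (and its analogue for $E(-H)[1]$) both $E$ and $E(-H)[1]$ are $\tilde\sigma$-stable of phase $\frac12$, and since $\mathcal{P}_{\tilde\sigma}(\frac12)$ is abelian the triangle $E\to i_\ast E|_C\to E(-H)[1]$ exhibits $i_\ast E|_C$ as $\tilde\sigma$-semistable with these two objects as its Jordan--Hölder factors. In particular $\tilde w$ is a genuine wall, the object being strictly semistable there.

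The next step is to describe all subobjects of $i_\ast E|_C$ inside the abelian category $\mathcal{P}_{\tilde\sigma}(\frac12)$. Here I would use that $E\not\cong E(-H)[1]$, their Mukai vectors $(2,H,s)$ and $(-2,H,-s)$ being distinct, together with the fact that the extension
$$ 0\to E\to i_\ast E|_C\to E(-H)[1]\to 0 $$
is non-split: were it split, the sheaf $i_\ast E|_C$ would contain $E(-H)[1]$ as a summand, forcing $H^{-1}(i_\ast E|_C)=E(-H)\neq 0$, which is impossible. A standard argument in a length-two abelian category with simple, non-isomorphic factors then shows that the only subobjects of $i_\ast E|_C$ in $\mathcal{P}_{\tilde\sigma}(\frac12)$ are $0$, $E$ and $i_\ast E|_C$ itself; in particular $\Hom(E(-H)[1],i_\ast E|_C)=0$, so no copy of the quotient splits back in.

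Finally I would invoke local finiteness of the walls (the Wall--chamber Proposition): along the ray $\{\sigma_{0,wH}\}$ the walls of $i_\ast E|_C$ form a discrete set, so there is $\epsilon>0$ with no wall in $(\tilde w,\tilde w+\epsilon)$, and on this chamber the Harder--Narasimhan type of $i_\ast E|_C$ is constant. If $i_\ast E|_C$ were unstable there, its maximal destabilising subobject $F$ would be a fixed $\sigma_{0,wH}$-semistable object with $\varphi_{0,wH}(F)>\frac12$ throughout the chamber. Since semistability is a closed condition and phases vary continuously, letting $w\to\tilde w^+$ would make $F$ a $\tilde\sigma$-semistable subobject of $i_\ast E|_C$ with $\varphi_{\tilde\sigma}(F)\geq\frac12$; by $\tilde\sigma$-semistability of $i_\ast E|_C$ this forces $F\in\mathcal{P}_{\tilde\sigma}(\frac12)$, whence $F=E$ by the previous paragraph. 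But then $\varphi_{0,wH}(F)=\varphi_{0,wH}(E)<\frac12$ for $w>\tilde w$, contradicting $\varphi_{0,wH}(F)>\frac12$. Hence $i_\ast E|_C$ is $\sigma_{0,wH}$-stable for $\tilde w<w<\tilde w+\epsilon$.

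I expect the main obstacle to be the limiting step in the last paragraph: justifying rigorously that a subobject destabilising $i_\ast E|_C$ throughout the chamber $(\tilde w,\tilde w+\epsilon)$ specialises, as $w\to\tilde w^+$, to an honest subobject lying in $\mathcal{P}_{\tilde\sigma}(\frac12)$. This rests on the interplay between closedness of semistability, continuity of the phase function, and constancy of the HN filtration inside a single chamber; once this specialisation is in place, the combinatorics of subobjects from the second paragraph closes the argument at once.
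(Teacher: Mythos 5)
Your proof is correct and follows essentially the same route as the paper: restrict to the ray $\sigma_{0,wH}$, use local finiteness of walls and constancy of the HN/JH filtration on a chamber $(\tilde w,\tilde w+\epsilon)$, pass to the limit $w\to\tilde w^+$ to place the maximal destabilising subobject in $\mathcal{P}_{\tilde\sigma}(\frac12)$, identify it as $E$ (since $E(-H)[1]$ is not a subobject of $i_\ast E|_C$), and rule that out because $\varphi_w(E)<\frac12$ for $w>\tilde w$. Your explicit central-charge computations and the nonsplit-extension argument classifying the subobjects of $i_\ast E|_C$ in $\mathcal{P}_{\tilde\sigma}(\frac12)$ merely flesh out details the paper leaves implicit.
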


\begin{proof}
Recall that $i_\ast E|_C$ is semistable at the point $\tilde{w}$. By local finiteness of the walls (see \cite[Proposition 9.3]{Bri08}), there exist $\varepsilon>0$ such that $i_\ast E|_C$ is always semistable or always unstable under the stability conditions $\sigma_{0, wH}$, for $w\in (\tilde{w},\tilde{w}+\varepsilon)$.
Moreover, up to choosing a smaller $\varepsilon$ we may assume that the HN filtration (or JH filtration) of $i_\ast E|_C$, with respect to $\sigma_{0,wH}$, is the same for every $w\in (\tilde{w},\tilde{w}+\varepsilon)$.

Let $F$ be a stable subobject of $i_\ast E|_C$ of maximum phase, with respect to any stability condition $\sigma_{0,wH}$. As $w\rightarrow \tilde{w}$, $F$ stays semistable. By uniqueness of JH factors, the set of JH factors of $F$ at $\tilde{w}$ is contained in $\{E, E(-H)[1]\}$, but $E(-H)[1]$ is not a subobject of $i_\ast E|_C$, hence $F=E$ or $F=i_\ast E|_C$. The first case may not occur because for $w>\tilde{w}$ the inequality $\varphi_w(E)<\varphi_w(i_\ast E|_C)$ holds; while, in the second case, $F= i_\ast E|_C$ is stable at $w\in(\tilde{w},\tilde{w}+\varepsilon)$.
\end{proof}

\begin{remark}
\label{rem:OXwall}
Keep the notation introduced in the proof of \ref{thm:injA}, we drop the hypothesis on the parity of $s$ as we will need this discussion for the next section too. We will study only the stability of $E$, because for $E(-H)[1]$ and analogous argument holds.

Assume that the arc of circle in the $(a,b)-$plane corresponding to
$$ \{(a,b)\in \R^+\times \R\colon \varphi_{b,a}(\mathcal{O}_X)=\varphi_{b,a}(E) \mbox{ and } b<0 \} \virgola $$
is an actual wall for $E$ in such $(a,b)-plane$. This means that given a point $(a,b)$ on the arc of circle, there is an actual wall $\mathcal{W}_0$ (in the entire space of stability conditions) containing the point $\sigma_{bH,aH}$, such that an open subset of the wall delimits the chamber where $E$ is stable. $\mathcal{W}_0$ is given by the equation $\{\varphi(F)=\varphi(E)\}$ for some stable subobject $F\subseteq E$ (in $\mathcal{A}(bH,aH)$); our aim is to show that, up to choosing the starting point $(a,b)$ with $|b|<<1$, we have that $F=\mathcal{O}_X$ and so $\mathcal{W}_0=\mathcal{W}$, i.e. a sufficiently small open subset of $\{\varphi(\mathcal{O}_X)=\varphi(E)\}$ is an actual wall for a chamber where $E$ is stable.

Since $F$ is a subobject of $E$ in $\mathcal{A}(bH,aH)$, it is an extension of an object $F_{-1}\in \mathcal{F}(bH,aH)$ and a subsheaf $F_0\subseteq E$. Moreover, since a small arc of circumference contained in
$$ \{(a,b)\in \R^+\times \R\colon \varphi_{b,a}(\mathcal{O}_X)=\varphi_{b,a}(E) \mbox{ and } b<0 \} \virgola $$
is contained in $\mathcal{W}_0$, we can say that the Mukai vector of $F$ is of the following form:
$$ v(F)=\alpha v(\mathcal{O}_X)+ \beta v(E) + (0,N,0)=(\alpha+2\beta, \beta H + N, \alpha+ s\beta) \virgola $$
for some $\alpha,\beta\in \Q$ and $N\in H^\bot\subseteq \NS(X)\otimes \R$. Moreover $\Im Z_{b,a}(F)$ satisfies the inequality
$$ 0< \Im Z_{b,a}(F)= a\beta H^2- (\alpha+2\beta)b H^2 \virgola $$
which in turns means that
\begin{equation}
\label{ineq:betalp}
\beta> \frac{b}{a-2b} \alpha \punto
\end{equation}

Since $F$ is stable, we have that
$$ v(F)^2=N^2+\alpha^2 v(\mathcal{O}_X)^2+\beta^2 v(E)^2+2\alpha\beta \langle v(\mathcal{O}_X) , v(E) \rangle=N^2-2\alpha^2-2(s+2)\alpha\beta \geq -2 \puntovirgola $$
it follows that 
\begin{equation}
\label{ineq:final}
\alpha(\alpha+(s+2)\beta)\leq 1 +\frac12 N^2\leq 1 \punto
\end{equation}

Now, since $\alpha+2\beta=\rk(F)$ and since $F$ is extension of two torsion-free sheaves (which cannot be simultaneously $0$) it follows that $1\leq \alpha+2\beta\in\Z$. Moreover $(s-2)\beta = \ch_2(F)\in \Z$. Finally notice that $\ch_1(F)\cdot H=4s \beta$, hence $\gcd(8,s-2)\beta \in \Z$.

Let us study the inequality \eqref{ineq:final} assuming that $\beta<0$; in particular $\beta \leq -\frac18$.
Since $a$ is bounded from below, up to choosing $|b|$ small enough, by inequality \eqref{ineq:betalp}, we may assume that $\alpha>8(s-2)\beta$, which in particular implies that inequality \eqref{ineq:final} cannot hold true.

If $\beta=0$, \eqref{ineq:final} becomes $\alpha^2\leq 1+ \frac12 N^2$ which means that $\alpha=1$ and $N=0$, since $\alpha\in \N$, so $v(F)=v(\mathcal{O}_X)$ and so $F=\mathcal{O}_X$.

Let us assume, finally, that $\beta>0$. Since $F$ is extension of a sheaf $F_{-1}\in \mathcal{F}(bH,aH)$ whose $H-$slope is negative and a subsheaf of $E$ whose slope is less or equal than $\mu_H(E)$, then the slope of $F$ is less or equal than $\mu_H(E)$, with equality if and only if $F_{-1}=0$ and $F_0=E$, i.e. iff $F=E$ which cannot happen because $F$ is a proper subobject of $E$. Therefore
$$ \frac{\beta H^2}{\alpha+2\beta}=\mu_H(F)<\mu_H(E)=\frac{H^2}2 $$
and so $\alpha>0$.

We, then, rewrite \eqref{ineq:final} as
$$ \alpha(\rk F+ s\beta)\leq 1 \punto $$

If $s$ is odd, then $\gcd(8,s-2)=1$ and $\beta \in \Z$, as well as $\alpha=\rk F-2\beta\in \Z$ and the previous inequality cannot hold true.

If $4 | s$, then $\gcd(8,s-2)=2$ and again $\alpha\in \Z$, which implies that \eqref{ineq:final} is false.

If $s\equiv 6 \pmod 8$, then $\beta\in \frac14\Z$, $\alpha\in \frac12 Z$ and $s\geq 6$, it follows that
$$ \alpha(\rk F+s\beta)\geq \frac12\left({1+\frac{s}4}\right)>1 \punto $$ 

If $s\equiv 2 \pmod 8$, then $\beta\in \frac18 \Z$, $\alpha\in \frac14\Z$ and $s\geq 10$. 
If $\beta=\frac18$, then
$$ \alpha(\rk F+s\beta)\geq \frac34\left({1+\frac{s}8}\right)>\frac32>1 \punto $$ 
If $\beta=\frac14$, then
$$ \alpha(\rk F+s\beta)\geq \frac12\left({1+\frac{s}4}\right)>\frac32>1 \punto $$
Otherwise $\beta \geq \frac38$ and 
$$ \alpha(\rk F+s\beta)\geq \frac14\left({1+\frac{30}8}\right)=\frac{38}{32}>1 \punto $$


It follows that a small open subset of the numerical wall $\{\varphi(\mathcal{O}_X)=\varphi(E)\}$ is an actual wall for a chamber where $E$ is stable.
\end{remark}

\subsubsection{} It is worth studying what happens in the case (a) of Lemma \ref{lem:stabEA}.

\begin{proposition}
\label{prop:exceptionA}
Suppose $\displaystyle A=\frac{2s-1}{4s}H+\Omega\in \NS(X)\otimes \R$ (with $\Omega\in H^\bot$) is actually a line bundle on $X$.
Assume also that case (b) in Lemma \ref{lem:stabEA} does not hold. Then there is no vector bundle $E'\in M_{X,H}(v)$ (different than $E$) such that its restrictions on $C$ coincides with the restriction of $E$, i.e. it cannot happen that $E|_C=E'|_C$.

Moreover $E|_C$ is slope semistable and $h^0(C, E|_C)\geq s+2$.
\end{proposition}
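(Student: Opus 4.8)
The plan is to run the same wall-crossing analysis as in Theorem \ref{thm:injA}, the only difference being that in case (a) the object $E$ is merely \emph{semistable} (not stable) at the critical stability condition $\tilde\sigma=\sigma_{0,\tilde w H}$, $\tilde w^2H^2=s$, so that $i_\ast E|_C$ will be semistable rather than stable. I first dispose of the \emph{moreover} part. By Lemma \ref{lem:stabEA} both $E$ and $E(-H)[1]$ become $\tilde\sigma$-semistable of phase $\tfrac12$ at $\tilde w^2H^2=s$, and the defining triangle $E\to i_\ast E|_C\to E(-H)[1]$ realises $G:=i_\ast E|_C$ as an extension, inside the heart $\mathcal A(\tilde\sigma)$, of two $\tilde\sigma$-semistable objects of phase $\tfrac12$ (note $E\in\mathcal T$, $E(-H)[1]\in\mathcal F[1]$, $G\in\mathcal T$); since $\mathcal P(\tfrac12)$ is abelian, $G$ is $\tilde\sigma$-semistable. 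The correspondence established in the proof of Lemma \ref{lem:pushingstab} between torsion subobjects $i_\ast F'\subseteq G$ and subsheaves $F'\subseteq E|_C$ (phase inequality $=$ slope inequality) then shows $E|_C$ is slope-semistable. For the bound on sections, $E(-H)$ is $\mu_H$-stable of slope $\mu_H(E)-H^2=-2s<0$, hence $H^0(E(-H))=0$, while $h^2(E)=\hom_X(E,\mathcal O_X)=0$ because $\mu_H(E)=2s>0$; thus $h^0(X,E)\ge\chi(E)=s+2$, and the cohomology sequence of $0\to E(-H)\to E\to G\to 0$ gives $h^0(C,E|_C)=h^0(X,G)\ge h^0(X,E)\ge s+2$.

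For the injectivity statement, I would reduce everything to showing that $E$ is the \emph{unique} subobject of $G$ in $\mathcal A(\tilde\sigma)$ with Mukai vector $v=(2,H,s)$. Indeed, if $E'\in M_{X,H}(v)$ satisfies $E'|_C=E|_C$ then $i_\ast E'|_C=G$, and its defining triangle exhibits $E'$ as a subobject $E'\hookrightarrow G$ with $v(E')=v$. To control such subobjects I compute the Jordan--Hölder factors of $G$ at $\tilde\sigma$. The sub-line-bundle $A=A_1$ of case (a) has $v(A)=(1,A,\tfrac s2)$ with $v(A)^2=-2$ (it is spherical), and the quotient $Q:=E/A$ is a rank-one torsion-free sheaf with $v(Q)=(1,H-A,\tfrac s2)$, $v(Q)^2=0$ and $\langle v(A),v(Q)\rangle=1$; together with the analogous factorisation of $E(-H)[1]$ this produces the four factors $A,\ Q,\ A(-H)[1],\ Q(-H)[1]$, of ranks $+1,+1,-1,-1$. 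As these Mukai vectors are pairwise distinct, the only sub-collection summing to $v=(2,H,s)$ is $\{A,Q\}$.

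The uniqueness is then a rank computation. Let $E'\hookrightarrow G$ be any subobject with $v(E')=v$; being a subobject of a phase-$\tfrac12$ semistable object and having the same phase, it is itself $\tilde\sigma$-semistable of phase $\tfrac12$, with Jordan--Hölder factors $\{A,Q\}$. Composing with the quotient $G\twoheadrightarrow G/E=E(-H)[1]$ yields a map whose image $I$ is at once a quotient of the \emph{sheaf} $E'$ in $\mathcal A$---hence (because a quotient in $\mathcal A$ of a sheaf in $\mathcal T$ is again an honest quotient sheaf) of rank $\ge 0$---and a subobject of $E(-H)[1]$, whose factors $A(-H)[1],Q(-H)[1]$ both have rank $-1$, so that every nonzero subobject of $E(-H)[1]$ has strictly negative rank. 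Therefore $I=0$, i.e. $E'\subseteq\ker\!\big(G\to E(-H)[1]\big)=E$, and since $v(E')=v(E)$ we conclude $E'=E$; in particular no $E'\ne E$ in $M_{X,H}(v)$ restricts to $E|_C$. The step I expect to be the real obstacle is the middle one: one must check that $A$ and $Q$ (equivalently $Q$, once $A$ is fixed) are genuine $\tilde\sigma$-\emph{stable} factors, so that the four listed objects are exactly the Jordan--Hölder factors of $G$ and every nonzero subobject of $E(-H)[1]$ indeed has negative rank. I would extract this from Lemma \ref{lem:stabEA} itself, since any phase-$\tfrac12$ stable subobject of $E$ must be one of the destabilisers it classifies; this rules out a finer splitting of $Q$, the only delicate point being to exclude (as in the $s=6$ discussion there) that $E$ carries a second type-(a) sub-line-bundle.
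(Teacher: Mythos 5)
Your overall skeleton (the \emph{moreover} part, the four-factor Jordan--Hölder multiset of $G=i_\ast E|_C$, and the conclusion that any $E'$ with $v(E')=v$ sits inside $E$) matches the paper, and your endgame is in fact slicker than the paper's: by showing the composite $E'\to G\to E(-H)[1]$ vanishes you avoid the paper's computation $\hom(Q,A)=0$, $\hom^1_X(A,Q)=\langle v(A),v(Q)\rangle=1$ and its ``unique non-split extension'' argument, and you also avoid identifying $Q$ with an ideal sheaf $\mathcal{I}_x(H-A)$ for that step. But there is a genuine gap exactly where you flag ``the real obstacle'': the stability of $Q=E/A$ (and of $A$) at $\tilde\sigma$ \emph{cannot} be extracted from Lemma \ref{lem:stabEA}. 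That lemma classifies stable \emph{subobjects of $E$} of the same phase; a stable phase-$\frac12$ subobject $L\subseteq Q$ is a subquotient of $E$, not a subobject --- its preimage $\tilde L\subseteq E$ contains $A$ and is therefore semistable but never stable, so the classification says nothing about $L$. Concretely, the paper's own proof must exclude two destabilisers of $Q$ that Lemma \ref{lem:stabEA} structurally cannot see: (i) $L=A\hookrightarrow Q$, i.e.\ $H-2A$ effective, ruled out by decomposing a curve in $|H-2A|$ into rational components and deriving $(H-A)^2\leq s-2<s$; and (ii) $v(L)=\left({1,\frac{H}2+N,\frac{s}2}\right)$ with $N\in H^\bot$, $N^2=-2$. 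Note that this second vector equals $\frac12 v(E)+(0,N,0)$, so $Z_{0,wH}(L)=\frac12 Z_{0,wH}(E)$ for \emph{every} $w$: it lies on the ray of $v(E)$, defines no wall at all, and is invisible to any wall-crossing classification; the paper kills it with the effectivity analysis of $B=\frac1{4s}H-(\Omega+N)$ and Lemma \ref{lem:noellint1}. This divisor-geometric work (which also uses the identification $Q\cong\mathcal{I}_x(H-A)$ via irreducibility of $M_{X,H}(v(Q))$) is the bulk of the paper's proof and is not recoverable from Lemma \ref{lem:stabEA}; your anticipated delicate point (a second type-(a) sub-line-bundle) is not where the actual difficulty lies.

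A second, smaller flaw: your parenthetical claim that ``a quotient in $\mathcal{A}$ of a sheaf in $\mathcal{T}$ is again an honest quotient sheaf'' is false --- for $0\to K\to E'\to I\to 0$ in $\mathcal{A}$ with $E'$ a sheaf, the long exact sequence of cohomology sheaves only gives $0\to H^{-1}(I)\to K\to E'\to H^0(I)\to 0$, and $H^{-1}(I)$ need not vanish, so $\rk I$ can a priori be negative. Fortunately your own framework repairs this: since $E'$ is semistable of phase $\frac12$ with JH factors $\{A,Q\}$, the image $I$ (taken in the abelian category $\mathcal{P}\left({\frac12}\right)$) has JH factors forming a sub-multiset of $\{A,Q\}$, all of rank $+1$, while as a subobject of $E(-H)[1]$ its factors lie in $\{A(-H)[1],Q(-H)[1]\}$; disjointness of these multisets already forces $I=0$, no rank count needed. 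So the endgame is sound once repaired, but the proposal as it stands is incomplete: the stability of $Q$, which your whole JH computation rests on, requires the independent geometric arguments the paper supplies and not a citation of Lemma \ref{lem:stabEA}.
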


\begin{proof}
Let $A\rightarrow E \rightarrow Q$ be the destabilizing sequence for $E$ at $\sigma_{0,w_0H}$ with $w_0^2H^2=s$. We must have $\displaystyle v(Q)=\left({ 1, H-A, \frac{s}2 }\right)$; notice that, for $x\in X$, $v(\mathcal{I}_x(H-A))=v(Q)$ and that $v(Q)^2=0$, so$\dim M_{X,H}(v(Q))=2$.
It follows that $\exists x_0\in X$ such that $Q= I_{x_0}(H-A)$ because the fibre of the map $X\rightarrow M_{X,H}(v(Q))$ has $0-$dimensional fibre.

There are no non-trivial maps $Q\rightarrow A$, because
$$ \hom_X(\mathcal{I}_{x_0}(H-A),A)=h^0(X, \mathcal{I}_{x_0}^\vee (2A-H))=0 \virgola $$
where the last equality holds since $\mathcal{I}_{x_0}^\vee=\mathcal{O}_X$ and $2A-H$ is not effective (it has negative intersection with $H$). If $Q$ is a stable object there are no maps $A\rightarrow Q$, therefore
$$ \ext_X^1(A,Q)=-\hom_X(A,Q)+\ext_X^1(A,Q)-\hom_X(Q,A)=\langle v(A) , v(Q) \rangle= 1 \virgola $$
this means that any non-split extension
$$ 0\rightarrow A\rightarrow \tilde{E} \rightarrow Q \rightarrow 0 $$
must be isomorphic to $E$. Moreover, in this case, the JH factors of $i_\ast E|_C$ are $A, \mathcal{I}_{x_0}(H-A), (A-H)[1], \mathcal{I}_{x_0}(-A)[1]$, the latter two have been obtained by transforming $A, \mathcal{I}_{x_0}(H-A)$ under the functor $- \otimes (-H)[1]$.

Given any object $E'\in \mathcal{A}=\mathcal{A}(0,\omega)$ whose restriction to $C$ is $E'|_C=E|_C$, we get the exact sequence (in $\mathcal{A}$)
$$ 0\rightarrow E' \rightarrow i_\ast E|_C \rightarrow E'(-H)[1] \rightarrow 0 \punto $$
Since $E'$ is $\sigma_{0,w_0H}-$semistable of phase $\frac12$ (like $E$ and $i_\ast E|_C$), then also $E'(-H)[1]$ must be semistable of the same phase. If $E'$ has a JH filtration with factors $A_1,\dots, A_s$ and $E'(-H)[1]$ has a JH filtration with factors $B_1,\dots, B_t$, then $i_\ast E$ has a JH filtration with factors $A_1,\dots, A_s, B_1, \dots, B_t$; since JH factors of an object in $\mathcal{A}$ are uniquely determined up to order, then we have that $\{A_1,\dots,A_s,B_1,\dots B_t\}=\{ A, \mathcal{I}_{x_0}(H-A), (A-H)[1], \mathcal{I}_x(-A)[1] \}$ (the equality is intended in terms of multiset). $A$ is certainly a subobject of $E'$ and $v(E')-v(A)=v(\mathcal{I}_{x_0}(H-A))$. Since the JH factors of $E'$ are in the set $\{ A, \mathcal{I}_{x_0}(H-A), (A-H)[1], \mathcal{I}_{x_0}(-A)[1] \}$, $E'$ must be extension of $A$ and $\mathcal{I}_{x_0}(H-A)$.
But as we noticed before, there is only one non-split extension of $A$ and $\mathcal{I}_{x_0}(H-A)$, while any split extension cannot be $\mu_H-$stable, then $E'\cong E$.

If $\mathcal{I}_{x_0}(H-A)$ is not $\sigma_{0,w_0H}-$stable, then it has a subobject $L\rightarrow \mathcal{I}_{x_0}(H-A)$ which is stable of the same phase as $\mathcal{I}_{x_0}(H-A)$. Its Mukai vector satisfies $v(L)^2\geq -2$ because $L$ is stable; moreover $L$ is an extension of a subsheaf of $\mathcal{I}_{x_0}(H-A)$ and a torsion free sheaf with non-positive $H-$slope. Let $v_L=(r_L, \Delta_L, s_L)$ be its Mukai vector. The realtion $2s_L= s r_L$ holds because $\displaystyle L\in \mathcal{P}\left({\frac12}\right)$. Moreover, since $L$ is an extension of a subsheaf of $\mathcal{I}_{x_0}$ and an object with non-positive slope, the inequality $\Delta_L\cdot H\leq 2s+1$ holds; if $r_L>1$, then $v_L^2\leq \frac{(2s+1)^2}{4s}-4s<-2$, which cannot occur. Hence $r_L=1$ and $L$ is a subsheaf of $\mathcal{I}_{x_0}(H-A)$. The inequality above can be satisfied only if either $L=A$ or $L$ is such that 
$$ v(L)=\left({ 1, \frac{H}2+N, \frac{s}2 }\right) \virgola$$
for some $N\in H^\bot$; in such case we must have $N^2=-2$ since $N=0\Rightarrow H\in 2\NS(X)$ and $N^2<-2\Rightarrow v(L)<-2$. (There is another numerical possibility for $v(L)$, but we excluded it together with case (b) of Lemma \ref{lem:stabEA}).

In the first case, $A$ is a subobject of $\mathcal{I}_x(H-A)$, thus $H^0(X,\mathcal{I}_x(H-2A))\neq 0$, in particular $H-2A$ is effective, but $(H-2A)\cdot H=2$ and $(H-2A)^2=-4$, this means that an effective divisor $\Gamma \in \lvert H-2A \rvert$ cannot be integral (i.e. an irreducible and reduced subscheme of $X$), thus $\Gamma$ has at least two irreducible and reduced components $\Gamma_1,\Gamma_2$. Since $H$ is ample, $\Gamma_i\cdot H\geq 1$, hence $\Gamma=\Gamma_1+\Gamma_2$.
Moreover $\Gamma_i^2\geq -2$ and $\Gamma^2=-4$, then both $\Gamma_1,\Gamma_2$ are rational curves and none of the linear system $\lvert \Gamma_i \rvert$ has more than a curve. Moreover one can check that $\Gamma_i\cdot A\in \{0,1\}$, therefore, setting $H-A=A+\Gamma_1+\Gamma_2$ we get a contradiction:
$$ (H-A)^2=A^2-4+2A(\Gamma_1+\Gamma_2)\leq s-6+4=s-2<s=(H-A)^2 \punto $$

If $L\hookrightarrow \mathcal{I}_x(H-A)$ with $\displaystyle v(L)=\left({ 1, \frac{H}2+N, \frac{s}2 }\right)$, then with a similar argument as before we conclude that that $\mathcal{I}_{x_0}\left({\frac1{4s}H-(\Omega+N)}\right)$ has non-zero global sections, in particular $B\coloneqq \frac1{4s}H-(\Omega+N)$ is effective. Then we have that $B\cdot H=1$, which means that curves in $|B|$ are integral. Hence $-2\leq B^2=\frac1{4s}+(\Omega+N)^2$, which means that $(\Omega+N)^2=-2-\frac1{4s}$: if $(\Omega+N)^2=-\frac1{4s}$, then according to Lemma \ref{lem:noellint1} such $B$ cannot exists.
It follows that $|B|$ consists in a unique rational curve and $\mathcal{I}_{x_0}(B)$ cannot have non-trivial global sections: $H^0(X, I_{x_0}(B))$ is the kernel of the restriction map $H^0(X, B)\rightarrow H^0(x_0, \mathcal{O}_{x_0})$, but $H^0(X, B)\cong \C$ and the map sends nonzero global sections to their evaluations in $x_0$.

Thus $\mathcal{I}_x(H-A)$ must be $\sigma_{0,w_0H}-$stable and the first part is proved.

Now consider the exact sequence $0\rightarrow E(-H)\rightarrow E \rightarrow i_\ast E|_C \rightarrow 0$; the long exact sequence induced in cohomology tell us that $h^0(C, E|_C)\geq h^0(X, E)$ because $E(-H)$ has no global sections. Since $h^0(X,E)-h^1(X,E)+h^2(X,E)=\chi(E)=s+2$ and since $h^2(X,E)=\hom_X(E, \mathcal{O}_X)=0$ because $E,\mathcal{O}_X$ are $\mu_H-$stable and $E$ has positive slope, we infer that
$$ h^0(C,E|_C)\geq h^0(X, E)= s+2+h^1(X,E)\geq s+2 \punto $$
The semistability of $E|_C$ follows from Lemma \ref{lem:pushingstab} since $i_\ast E|_C$ is $\sigma_{0,\tilde{w}}-$semistable.
\end{proof}


\begin{remark}
\label{rem:morph}
Let us keep the same notations introduced in \ref{prop:exceptionA}; let us consider the product $X\times X$ and its two projections $p_1,p_2\colon X\times X \rightarrow X$ and let $\Delta\subseteq X\times X$ be the diagonal of the product, whose ideal sheaf is denoted by $\mathcal{I}_\Delta$. We obtain the sheaf $\mathcal{I}_\Delta(p_1^\ast(H-A))$ whose restriction to $X\times \{x\}\subseteq X\times X$ is $\mathcal{I}_x(H-A)$ for any point $x\in X$.

This construction gives a morphism $\rho\colon X\rightarrow Z\subseteq M_{X,H}(v(Q))$ by $\rho(x)\coloneqq\mathcal{I}_x(H-A)$ whose image $Z$ is closed because $X$ is projective.
\end{remark}

\subsection{The case $g\equiv 3 \pmod 4$}
\label{sect:cong3}

\subsubsection{} When the genus is congruent to 3 mod 4, i.e $g=2s+1$ for some odd integer $s\geq 5$, then all the settings stay unchanged: we still choose $v=(2,H,s)$ and $T=B_C^{s+2}(2,4s)$. In this paragraph we prove that the restriction map $N\rightarrow T$ is injective also in this case, hence the Theorem \ref{thm:BN23} holds in the case with $g\equiv 3 \pmod 4$.

\begin{lemma}
\label{lem:stabEB}
Assume that the gonality of $C$ is at least 7 and that $X$ is not an hyperelliptic $K3$ surface.

$E$ is $\sigma_{0,wH}-$stable $\forall\, \displaystyle w>\sqrt{\frac2{H^2}}$.
\end{lemma}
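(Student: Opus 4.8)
The plan is to run, essentially verbatim, the numerical machine of the proof of Lemma \ref{lem:stabEA}, and only at the very end to use the parity of $s$ together with the two standing hypotheses to eliminate every would-be destabilizer. So I would argue by contradiction: suppose $E$ is not $\sigma_{0,w_0H}$-stable for some $w_0 > \sqrt{2/H^2}$, and pick a stable subobject $F \hookrightarrow E$ in $\mathcal{A}$ of the same phase, with $v(F) = (r_0, c_0H + \Omega, s_0)$, $c_0 = k/(4s)$ and $\Omega \in H^\perp$. The equality of phases forces, exactly as before, that $r_0$ be the least integer above $2c_0$ and $s_0$ the least integer above $c_0 s$; writing $s_0 = (k+a)/4$ and $r_0 = (k+b)/(2s)$ one obtains $1 \le a \le 4$, $1 \le b \le 2s$, the congruences $k \equiv -a \pmod 4$, $k \equiv -b \pmod{2s}$, and the inequality $(a+b)k + ab < 8s$. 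This again splits into the regimes $k = 4s-b$ (whence $r_0 = 2$) and $k = 2s-b$ (whence $r_0 = 1$).

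In the first regime the inequality isolates $a = b = 1$ just as in the even case, producing a class $H - \Delta_0$ with square $0$ and intersection $1$ against $H$, which Lemma \ref{lem:noellint1} forbids irrespective of the parity of $s$. The genuinely new input is the second regime: the integrality of $s_0 = (2s + a - b)/4$ now reads $a \equiv b + 2 \pmod 4$, since $2s \equiv 2 \pmod 4$ when $s$ is odd, rather than $a \equiv b \pmod 4$; in particular the half-integral vectors (a) and (b) of Lemma \ref{lem:stabEA} no longer occur. I would then feed this congruence into the quadratic inequality $b^2 - 2bs + 2(4-a)s > 0$ and into the wall-location identity $w_0^2 H^2 = as/b$ (so that $as > 2b$), and a finite check should leave exactly the two spherical vectors $v(F) = (1, A, \tfrac{s+1}{2})$ with $A\cdot H = 2s-1$, $A^2 = s-1$, and $v(F) = (1, A, \tfrac{s-1}{2})$ with $A\cdot H = 2s-3$, $A^2 = s-3$.

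It remains to kill these two line bundles $A$ geometrically, and here the hypotheses enter. In either case I set $D := H - 2A \in \NS(X)$ and compute $D^2 = 0$, with $D\cdot H = 2$ in the first case and $D\cdot H = 6$ in the second. Since $D\cdot H > 0$ and $D^2 = 0$, Riemann--Roch together with $h^2(X,D) = h^0(X,-D) = 0$ gives $h^0(X,D) \ge 2$, so $|D|$ moves; and since $(D-H)\cdot H < 0$ the restriction sequence $0 \to \mathcal{O}_X(D-H) \to \mathcal{O}_X(D) \to \mathcal{O}_C(D) \to 0$ embeds $H^0(X,D)$ into $H^0(C, D|_C)$, yielding a $g^1_{D\cdot H}$ on $C$. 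In the first case $D\cdot H = 2$ realizes $(X,H)$ as a hyperelliptic $\Kt$ surface, contradicting the hypothesis; in the second case $D\cdot H = 6$ forces $\mathrm{gon}(C) \le 6$, contradicting $\mathrm{gon}(C) \ge 7$. With every candidate excluded, no destabilizing $F$ exists and $E$ is $\sigma_{0,wH}$-stable for all $w > \sqrt{2/H^2}$.

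The delicate point I anticipate is the bookkeeping of the second paragraph: I must be sure that the shifted congruence, the quadratic inequality and the wall-position bound really conspire to leave only the two vectors above, with no sporadic solution for small $s$ (the tight case being $s = 5$, where the first regime alone survives and is disposed of by non-hyperellipticity). Once the list is pinned down the geometric step is robust, since it depends only on the numerics of $D$ in $\NS(X)$.
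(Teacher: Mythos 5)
Your proposal is correct and follows essentially the same route as the paper: the identical numerical machinery from Lemma \ref{lem:stabEA}, with the parity of $s$ entering only through the shifted congruence $a\equiv b+2\pmod 4$ in the regime $k=2s-b$, isolates exactly the two candidate vectors $\left({1,\Delta,\frac{s+1}2}\right)$ and $\left({1,\Delta,\frac{s-1}2}\right)$ with $\Delta\cdot H=2s-1$ and $2s-3$ (your finite check, including the wall-position bound $as>2b$, does pin these down, also for $s=5$), which are then excluded geometrically just as in the paper. The only, harmless, divergence is in the case $\Delta\cdot H=2s-1$: the paper gets an unconditional contradiction by showing the elliptic pencil $|H-2\Delta|$ would cut a $g^1_1$ on a smooth curve in the ample class $\Delta+(H-2\Delta)$, whereas you invoke the non-hyperellipticity (equivalently, the gonality) hypothesis via a $g^1_2$ on $C$; and your restriction-sequence argument produces the $g^1_6$ directly on $C$ rather than on the general member of $|H|$, which is if anything slightly cleaner than the paper's formulation.
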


\begin{proof}
As we did in the proof of the previous case, we get the same inequality
\begin{equation}
\label{ineq:destabB}
 r_0s_0=\frac{k^2+(a+b)k+ab}{8s}<\frac{k^2+8s}{8s}=2c_0^2s+1 \punto 
\end{equation}

$E$ can be destabilized only if there are integers $1\leq a\leq 4, 1\leq b\leq 2s, 1\leq k \leq 4s$ satisfying the inequality \eqref{ineq:destabB} above and such that
$$
\left\{ {
\begin{array}{ll}
k\equiv -a  & \pmod 4 \\
k\equiv -b  & \pmod{2s} \punto
\end{array}
} \right.
$$

Either $k=2s-b$ or $k=4s-b$. In the second case, Inequality \eqref{ineq:destabB} becomes
$$ b^2-4bs+4(2-a)s >0 \punto $$
Now, if $2\leq b\leq 2s$ the above inequality cannot be satisfied, since $b^2-4bs+4s<0$.
For $b=1$ we have also $a=1$ and the inequality is satisfied. In this case we would have $r_0=2$, $s_0=s$ and $4sc_0=\Delta_0\cdot H=4s-1$. As in the Lemma \ref{lem:stabEA}, there cannot be any such object destabilizing $E$.

If $k=2s-b$, Inequality \eqref{ineq:destabB} becomes
$$ b^2-2bs+2(4-a)s>0 $$
Since we have that $s_0-c_0s=\frac{w_0^2H^2}2(r_0-2c_0)>r_0-2c_0$ (and so $\frac{a}4>\frac{b}s$), $b$ cannot be equal to $2s-1,2s-2,2s-3$. Also $4\leq b\leq 2s-4$ does not carry any solution. The only possible solution can be $b=1,3$ (and $a=3,1$ respectively). 

For $b=1,a=3$ we have that $\displaystyle v(F)=\left({1,\frac{2s-1}{4s}H +\Omega, \frac{s+1}2 }\right)$. Then $v(F)^2=-2+\frac{1}{4s}+\Omega^2$, which implies $\Omega^2=\frac1{4s}$. Consider the divisor $D=H-2\Delta_1$, where $\Delta_1=c_1(F)$, then $D^2=0$ and $D\cdot H=2$, it follows, from an argument similar to that one in the proof of Lemma \ref{lem:noellint1}, that $D$ must be an effective divisor; curves in $|D|$ are elliptic. Let $R$ be an effective divisor; $R\cdot H>0$, thus either $R\cdot \Delta_1>0$ or $R\cdot (\Delta_1+D)>0$. Since $D$ contains elliptic curves, $R\cdot D\geq 0$, thus $R\cdot (D+\Delta_1)>0$. This means that $D+\Delta_1\in \Amp(X)$
Notice that $D\cdot \Delta_1=1$, therefore $D$ cut a $g_1^1$ on a smooth curve belonging to $|\Delta_1+D|$, which is impossible. Therefore this case will never occur.

For $b=3,a=1$ we have that $\displaystyle v(F)=\left({1,\frac{2s-3}{4s}H +\Omega, \frac{s-1}2 }\right)$. Then $v(F)^2=-2+\frac{9}{4s}+\Omega^2$, which implies $\Omega^2=-\frac9{4s}$, consider the divisor $D=H-2\Delta_2$, where $\Delta_2=c_1(F)$, then $D^2=0$ and $D\cdot H=6$; it follows that such a divisor cut out a $g^1_6$ on the general curve of $|H|$ (better say any smooth curve of $H$ not containing base points of $|D|$), hence there cannot exist such a divisor if the gonality of $C$ (which is greater or equal than the gonality of the generic curve in $|H|$) is at least 7.
\end{proof}

\begin{remark}
If we drop the hypothesis on the gonality of $C$ being at least 7, then it will be possible that an object $F$ with Mukai vector $\displaystyle v(F)=\left({1,\frac{2s-3}{4s}H +\Omega, \frac{s-1}2 }\right)$ destabilizes $E$, but in this case we would have
$$ \frac{a}2 = \frac{w_0^2H^2}2 \cdot \frac{b}s $$
and so $\displaystyle w_0^2H^2=\frac{s}3<s$, hence $E$ is still $\sigma_{0,wH}-$stable for $\displaystyle w^2> \frac{s}{3H^2}$ and in particular it is stable for $w=\tilde{w}$ such that $\tilde{w}^2H^2=s$.
\end{remark}

\subsubsection{} As in the previous case, we can prove an analogous statement for the object $E(-H)[1]$ with same assumptions as in Lemma \ref{lem:stabEB} and the injectivity of the restriction map $M_{X,H}(v)\rightarrow B_C^{s+2}(2,4s)$ follows as in Theorem \ref{thm:injA}:

\begin{theorem}
\label{thm:injB}
Let $E\in M_{X,H}(v)$ be a $\mu_H-$stable vector bundle. Assume the gonality of $C$ is at least 7. Then the following statements hold true:
\begin{enumerate}[label=(\alph*)]
\item The restriction $E|_{C}$ is a slope stable vector bundle on $C$ and $h^0(C,E|_C)= 2+s$.
\item $\Hom_X(E,E(-H)[1])=0$.
\end{enumerate}

In particular the restriction map $N\rightarrow T$ is well defined and injective.
\end{theorem}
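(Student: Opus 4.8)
The plan is to rerun the proof of Theorem \ref{thm:injA}, which is now cleaner since Lemma \ref{lem:stabEB} establishes the stability of $E$ with no surviving exceptional subobjects once the gonality is at least $7$ (and $X$ is not hyperelliptic), so the special cases (a)--(b) of the even-genus analysis never arise. First I would transfer Lemma \ref{lem:stabEB} to $E(-H)[1]$: since $v(E(-H)[1])=(-2,H,-s)$ differs from $v(E)=(2,H,s)$ only by signs, the twist $-\otimes\mathcal{O}_X(-H)[1]$ carries a destabilizing sequence $F\hookrightarrow E$ to $F(-H)[1]\hookrightarrow E(-H)[1]$ and back, exactly as in the paragraph following Lemma \ref{lem:stabEA}; hence $E(-H)[1]$ is $\sigma_{0,wH}$-stable for every $w^2H^2>2$ as well. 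Fixing $\tilde w$ with $\tilde w^2H^2=s$ (which lies in this range because $s\ge 5>2$), one computes $Z_{0,\tilde wH}(E)=i\tilde wH^2=Z_{0,\tilde wH}(E(-H)[1])$, so at $\tilde\sigma=\sigma_{0,\tilde wH}$ both objects are stable of phase $\tfrac12$; being non-isomorphic stable objects of the same phase, they admit no nonzero morphism, which is assertion (b).

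For (a) I would use the triangle $E\to i_\ast E|_C\to E(-H)[1]$: at $\tilde\sigma$ the outer terms are stable of equal phase, so $i_\ast E|_C$ is $\tilde\sigma$-semistable with $E$ and $E(-H)[1]$ as its Jordan--H\"older factors. Lemma \ref{lem:stabpf} promotes this to $\sigma_{0,wH}$-stability for $w\in(\tilde w,\tilde w+\epsilon)$, and since every subobject of $i_\ast E|_C$ in $\mathcal{A}(0,wH)$ has positive rank its phase drops as $w$ grows, so $i_\ast E|_C$ stays stable for all $w>\tilde w$; Lemma \ref{lem:pushingstab} then yields slope-stability of $E|_C$.

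The section count is word-for-word the even-genus one, because Remark \ref{rem:OXwall} was written with the parity hypothesis dropped and the Mukai vectors entering Lemma \ref{lem:wallglosect} are unchanged: applying $\Hom_X(\mathcal{O}_X,-)$ to the triangle and invoking that lemma with $r=2$, $c=1$ and $H^2=4s$ bounds $\hom_X(\mathcal{O}_X,E)\le s+2$ and $\hom_X(\mathcal{O}_X,E(-H)[1])=0$. Since $E(-H)$ is $\tilde\sigma$-stable of negative phase while $\mathcal{O}_X$ has phase $0$, also $\hom_X(\mathcal{O}_X,E(-H))=0$, so the long exact sequence collapses to $\Hom_X(\mathcal{O}_X,E)\cong\Hom_X(\mathcal{O}_X,i_\ast E|_C)$. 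Combining $\chi(E)=s+2$ with $h^2(X,E)=\hom_X(E,\mathcal{O}_X)=0$ forces $h^0(X,E)=s+2$, whence $h^0(C,E|_C)=s+2$. Injectivity is then identical to Theorem \ref{thm:injA}: the Jordan--H\"older factors of $i_\ast E|_C$ are unique, so a second bundle $E'$ with $E'|_C\cong E|_C$ would contribute factors $E',E'(-H)[1]$, forcing $E'\cong E$.

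The step I expect to demand the most care is the transfer of Lemma \ref{lem:stabEB} to $E(-H)[1]$ together with the verification that the numerical wall $\{\varphi(\mathcal{O}_X)=\varphi(E)\}$ genuinely bounds a chamber in which $E$ (respectively $E(-H)[1]$) is stable, so that Lemma \ref{lem:wallglosect} is applicable; this is precisely the parity-free content of Remark \ref{rem:OXwall}, and once it is in hand every remaining step is a formal repetition of the even-genus argument.
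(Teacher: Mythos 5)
Your proposal is correct and takes essentially the same route as the paper, which proves Theorem \ref{thm:injB} exactly by transferring Lemma \ref{lem:stabEB} to $E(-H)[1]$ via the twist $-\otimes\mathcal{O}_X(-H)[1]$ and then repeating the proof of Theorem \ref{thm:injA}, with the parity-free Remark \ref{rem:OXwall} licensing the application of Lemma \ref{lem:wallglosect}. The details you supply (the phase computation at $\tilde{w}^2H^2=s$, the bound $h^0(X,E)\leq s+2$ together with $\chi(E)=s+2$ and $h^2(X,E)=0$, and injectivity via uniqueness of Jordan--H\"older factors) are precisely the steps the paper leaves implicit in its ``follows as in Theorem \ref{thm:injA}''.
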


\begin{remark*}
By dropping the hypothesis on the gonalty on $C$, we still get that the restriction map is injective, because $E,E(-H)[1]$ are both stable of the same phase at $\sigma_{0,\tilde{w}}$, but we cannot infer that the restriction $E|_C$ has exactly $s+2$ global sections, what we can say about the number of global sections is that $h^0(C,E|_C)\geq s+2$.
\end{remark*}

\subsubsection{} Theorem \ref{thm:injB} proves a case of Theorem \ref{thm:BN23}.

\subsection{The case $g\equiv 2 \pmod 4$}
\label{sect:cong2}

\subsubsection{} Finally we will discuss the case $g\cong 2 \pmod 4$. We set $g=p+1$ and choose the Mukai vector $v=(4, 2H, p)$ and consider the restriction on the Brill-Noether locus $B_C^{p+4}(4,4p)$. We will prove also in this case that the restriction map $N\rightarrow T$ is injective, concluding thus the proof of Theorem \ref{thm:BN23}

\begin{lemma}
\label{lem:stabEC}
$E$ is $\sigma_{0,wH}-$stable $\forall\, \displaystyle w> \sqrt{\frac2{H^2}}$ unless $E$ has a subobject whose Mukai vector is
$$ \left({1, \frac{p-2}{2p}H+\Omega, \frac{p-1}4 }\right) \virgola $$
with $\Omega\in H^\bot$, in such case $E$ is $\sigma_{0, wH}-$stable $\forall\, \displaystyle w>\sqrt{\frac{p}{4H^2}}$.
\end{lemma}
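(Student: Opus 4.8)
The plan is to run the same argument as in Lemmas \ref{lem:stabEA} and \ref{lem:stabEB}, adapting the arithmetic to $v(E)=(4,2H,p)$ and $H^2=2p$. By Proposition 14.2 in \cite{Bri08} there are arbitrarily large $w$ for which $E$ is $\sigma_{0,wH}$-stable, so I argue by contradiction: assume there is $w_0>\sqrt{2/H^2}$ at which $E$ is $\sigma_{0,w_0H}$-semistable but not stable, and take a stable subobject $F\hookrightarrow E$ in $\mathcal{A}(0,w_0H)$ of the same phase, with $v(F)=(r_0,c_0H+\Omega,s_0)$, $\Omega\in H^\bot$ and $c_0=k/H^2$. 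By Remark \ref{subinab}, $F$ is an extension of an object of $\mathcal{F}$ of non-positive $H$-slope and a subsheaf of $E$; this gives $2c_0<r_0$ and $\mu_H(F)<\mu_H(E)=\frac{H^2}{2}=p$. Since $E$ has rank $4$, this forces $r_0\in\{1,2\}$ and $k<r_0 p$.

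Next I extract the numerical constraints exactly as before. From $Z_{0,wH}(E)=-p+2w^2H^2+2iwH^2$ and $Z_{0,wH}(F)=-s_0+r_0\frac{w^2H^2}{2}+iwc_0H^2$, equality of phases at $w_0$ gives
\[ 2s_0-pc_0=w_0^2H^2(r_0-2c_0)>0 \virgola \]
and combined with the stability bound $v(F)^2=c_0^2H^2+\Omega^2-2r_0s_0\geq-2$ (recall $\Omega^2\leq0$) this yields
\[ \frac{k^2}{2H^2}<r_0s_0<\frac{k^2}{2H^2}+1 \punto \]
As in the previous lemmas, this pins down $r_0$ and $s_0$: writing $r_0=\frac{k+b}{p}$ and $s_0=\frac{k+a}{4}$ with $1\leq a\leq4$, $1\leq b\leq p$, one is reduced to the congruences $k\equiv-a\pmod4$, $k\equiv-b\pmod p$ together with the bounding inequality $(a+b)k+ab<2H^2=4p$. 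Because $k<2p$ there are exactly the two cases $k=2p-b$ (whence $r_0=2$) and $k=p-b$ (whence $r_0=1$), which mirror the split $k=4s-b$, $k=2s-b$ of Lemma \ref{lem:stabEA}.

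I then carry out the case analysis. For $r_0=2$ the inequality becomes $2p(a+b-2)<b^2$, whose only solution with $b\leq p$ is $a=b=1$; but then $s_0=p/2\notin\Z$, so the mod-$4$ congruence fails because $p$ is odd, and this case is empty (here the parity of $p$ does directly what Lemma \ref{lem:noellint1} did before). For $r_0=1$ the inequality reads $p(a+b-4)<b^2$. Using $p\equiv1\pmod4$, the mod-$4$ congruence reduces to $a\equiv b-1\pmod4$, and a direct check shows that among the pairs with $a+b\leq4$ the unique admissible one is $(a,b)=(1,2)$, giving $k=p-2$ and
\[ v(F)=\left({1,\frac{p-2}{2p}H+\Omega,\frac{p-1}{4}}\right) \virgola \]
precisely the asserted exceptional vector. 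The remaining admissible pairs all have $a+b\geq5$, which forces $b^2>p$ and hence only finitely many (small) values of $p$; for these I expect to invoke the same geometric obstructions as in Lemmas \ref{lem:noellint1} and \ref{lem:stabEB}, namely that a divisor built from $\Delta_0$ (of the shape $H-2\Delta_0$, as in Lemma \ref{lem:stabEB}) would cut a pencil of degree at most $6$ on $C$, contradicting the gonality and non-hyperellipticity assumptions in force throughout this section. Verifying that these sporadic higher $(a,b)$ are genuinely excluded in the relevant range of $p$ is the delicate point of the proof.

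Finally I compute the critical radius for the surviving subobject. Substituting $r_0=1$, $s_0=\frac{p-1}{4}$, $c_0=\frac{p-2}{2p}$ into the phase equation gives $2s_0-pc_0=\frac12$ and $r_0-2c_0=\frac2p$, hence $w_0^2H^2=\frac{p}{4}$. Comparing the leading terms of $\varphi_{0,wH}(F)$ and $\varphi_{0,wH}(E)$ as $w\to+\infty$ shows $\varphi_{0,wH}(F)<\varphi_{0,wH}(E)$ there, so $F$ has strictly smaller phase for $w>w_0$ and strictly larger phase for $w<w_0$. Therefore $E$ is $\sigma_{0,wH}$-stable for all $w>\sqrt{p/(4H^2)}$ and becomes semistable exactly at $w_0^2H^2=p/4$, which is the stated dichotomy.
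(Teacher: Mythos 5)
The paper itself prints no proof of Lemma \ref{lem:stabEC} --- it only remarks that the arguments of Lemmas \ref{lem:stabEA} and \ref{lem:stabEB} carry over --- and your proposal reproduces most of that template correctly: the reduction via Remark \ref{subinab}, the bounds $\frac{k^2}{2H^2}<r_0s_0<\frac{k^2}{2H^2}+1$, the parametrization $s_0=\frac{k+a}{4}$, $r_0=\frac{k+b}{p}$ with the two congruences and the inequality $(a+b)k+ab<4p$, the parity observation disposing of $(a,b)=(1,1)$ in the $r_0=2$ branch, and the computation of the wall $w_0^2H^2=\frac{p}{4}$ for the exceptional vector are all right. However, there is a genuine gap in the case elimination: you never use the hypothesis $w_0^2H^2>2$. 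In the template this hypothesis is converted into an arithmetic constraint via the phase equation: $\frac{a}{2}=2s_0-pc_0=w_0^2H^2(r_0-2c_0)>2\cdot\frac{b}{p}$, i.e. $\frac{a}{4}>\frac{b}{p}$; this is precisely the step ``and so $\frac{a}{4}>\frac{b}{s}$, so $b$ cannot be equal to $2s-1,2s-2,2s-3$'' in Lemmas \ref{lem:stabEA} and \ref{lem:stabEB}. Without it, your claim that the remaining pairs with $a+b\geq 5$ occur ``only for finitely many (small) values of $p$'' is false: for \emph{every} $p\equiv 1\pmod 4$ the pairs $(a,b)=(3,p-1)$, $(2,p-2)$, $(1,p-3)$ (that is, $k=1,2,3$, with $r_0=s_0=1$) satisfy all the constraints you impose --- the inequality $b^2>p(a+b-4)$, both congruences, minimality, and the product bound --- and they produce candidate destabilizers with $c_1(F)^2=0$ and $c_1(F)\cdot H=1,2,3$. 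The clean exclusion is the omitted constraint: for $p>4$ each of these pairs has $\frac{b}{p}>\frac{a}{4}$, equivalently its wall sits at $w_0^2H^2=\frac{ap}{2b}<2$, outside the region under consideration. Your proposed fallback to geometric obstructions does not suffice here, because Lemma \ref{lem:stabEC}, unlike Lemma \ref{lem:stabEB}, carries no gonality hypothesis: Lemma \ref{lem:noellint1} handles $c_1(F)\cdot H=1$ and non-hyperellipticity could handle $c_1(F)\cdot H=2$, but the $c_1(F)\cdot H=3$ case has no such exit under the stated hypotheses.

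A secondary, fixable lapse: ``since $E$ has rank $4$, this forces $r_0\in\{1,2\}$'' is not a valid inference. A subobject in $\mathcal{A}(0,w_0H)$ may have any rank (its $\mathcal{F}$-part is unconstrained), and the slope bound only gives $k<r_0p$, so $c_0<2$ and the branches $k=3p-b$ and $k=4p-b$ must be examined. They do die arithmetically: substituting into $(a+b)k+ab<4p$ and using $b\leq p$ yields respectively $3a+2b<4$ and $4a+3b<4$, both impossible for $a,b\geq 1$; but this check has to appear in the proof rather than being replaced by the remark on the rank of $E$.
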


\subsubsection{} The proof of this lemma is similar to those of Lemmas \ref{lem:stabEA} and \ref{lem:stabEB}.
As in the previous cases, the restriction map $M_{X,H}(v)\rightarrow B_C^{p+4}(4,4p)$ is well defined and injective when $g\cong 2 \pmod 4$ (so Theorem \ref{thm:BN23} holds true also in this case), moreover under the hypothesis of Lemma \ref{lem:stabEC}, the sheaves in the image of the map have no more than $p+4$ linearly independent global sections.






\section{Existence of curves with high dimensional Brill-Noether locus}

\subsubsection{} In this section we are going to give a class of examples of curves embedded in a $\Kt$ surface whose Brill-Noether locus $T=B_C^{s+2}(2,4s)$ (respectively $T=B_C^{p+4}(4,4p)$) has dimension greater than $2$, so that the restriction map $N\rightarrow T$ cannot be surjective. The main tool we will use is the injectivity proved in the previous section.

\subsubsection{} The problem of the construction of surfaces extending curves has been addressed in the literature, we report, as an example, \cite{ABS17} and \cite{CDS18}.
Let $X$ be a $\Kt$ surface and $C\hookrightarrow X$ a smooth curve of genus $g$. The \emph{Gauss-Wahl map} $\Phi_C$ is defined as
\begin{eqnarray}
\Phi_C \colon  \bigwedge^2 H^0(C,K_C) &\rightarrow &  H^0(C, 3K_C) \nonumber \\
   f \wedge g  &\mapsto  &  f\cdot dg - g \cdot df  \virgola
\end{eqnarray}
where $K_C$ is the canonical bundle on $C$. Consider, then, the adjoint map $\Phi_C^\mathbf{t}$.

\subsubsection{} Consider a curve $C$ with genus $g$ and Clifford index greater than 2 which is canonically embedded in $\P^{g-1}$.
Following the construction in \cite[\S 5]{CDS18} (based on \cite[Theorem 3]{ABS17}), for every non-zero element $v\in \ker \Phi_C^\mathbf{t}$, there exists a surface $S_v\hookrightarrow \P^g$ extending the original curve $C$; such a surface $S_v$ depends only on the class of $v$ in $\P(\ker \Phi_C^\mathbf{t})$; i.e. $S_{\lambda v}=S_v$ for $\lambda\in \C^\ast$. We are interested in smooth $\Kt$ surfaces constructed in this way.
\cite[Theorem 2.7]{CDS18} shows that if $\cork(\Phi_C)\geq 2$ and if the Clifford index of $C$ is greater than $2$, 
then the map $(X,C)\mapsto C$, where $X$ is a smooth $\Kt$ surface and $C\hookrightarrow X$ a smooth curve of genus $g$ has positive-dimensional fibre $c_g^{-1}(C)$, i.e. $C$ is contained in every surface belonging to a positive-dimensional subspace of the moduli space $\mathcal{K}_g$ of polarised $\Kt$ surfaces.

\subsubsection{} Now, if $C\hookrightarrow X$ is contained in a smooth $\Kt$ surface $X$ such that the restriction map $N_X=M_{X,H}(v)\rightarrow B_C^{s+2}(2,4s)=T$ (or $B_C^{p+4}(4,4p)$) is injective (see for instance Theorems \ref{thm:BN23} and \ref{thm:BN1}), then a deformation of $X$ in $c_g^{-1}(C)$ induces a deformation of $M_{X,H}(v)$ inside $T$. This deformation is not trivial, otherwise all the surface in an open set $c_g^{-1}(C)$ would be Fourier-Mukai partners of the same $\Kt$ surface $M_{X,H}(v)$. It follows that $\dim T \geq 3$.

\subsubsection{} Finally, let $Y$ be a Fano threefold with index $i_Y=1$, so that the generic surface in $|-K_Y|$ is a smooth $\Kt$ surface. Assume that the intersection of two such surfaces is a curve $C$ with genus $g(C)\geq 22$ and clifford index greater than 2, then \cite[Theorem 2.1]{CDS18} implies that $\cork (\Phi_C)\geq 2$ and thus $\dim T\geq 3$. We work out this construction explicitly in the following example:

\begin{example}
\label{ex:curvehBN}
Let $l_1,l_2\subseteq \P^3$ two skew lines. Let $\pi\colon Y\rightarrow \P^3$ be the blow-up of $\P^3$ at those lines. By construction, $Y$ is a Fano threefold whose anticanonical divisor is $-K_Y=4L-E_1-E_2$, where $L$ is the total transform of $\P^2\subseteq \P^3$ and $E_i=\pi^{-1}(l_i)$ is the exceptional divisor corresponding to $l_i$.  Since $K_Y$ is a primitive divisor, the index of the Fano variety is $i_Y=1$. We compute the genus of $Y$ by
$$ 2g-2=(-K_Y)^3=64L^3-48L^2(E_1+E_2)+12L(E_1^2+E_2^2)-(E_1^3+E_2^3) = 64+0-24+4=44 \virgola $$
so $g(Y)=23$. Consider a smooth $\Kt$ surface $X\in |-K_Y|$ and the ample line bundle $H = (-K_Y)|_X$, so the general curve $C\in |H|$ is a smooth curve of genus $23 \equiv 3 \pmod4$, hence a curve such that the restriction map $M_{X,H}(2,H,11)\rightarrow B_C^{13}(2,44)$ is injective.

We will show that there is a smooth curve $C\subseteq Y$ such that $C$ is contained in every surface belonging to a linear system $V\subseteq |-K_Y|$ such that surfaces in $V$ are not isomorphic to each other.

Notice that surfaces in $|-K_Y|$ are exactly strict transform of quartic surfaces in $\P^3$ containing the two lines $l_1,l_2$.

Fix two lines $r_1, r_2 \subseteq \P^3$ and consider the set $V$ of quartic surfaces containing the lines $r_1, r_2$. There is an open dense subset $U\subseteq V$ whose surfaces do not contain any further line; thus a point $X\in U$ has Picard group $\Pic(X)= L\Z \oplus r_1 \Z \oplus r_2\Z$.



There is a morphism
\begin{eqnarray}
\phi \colon U & \rightarrow & S^2(\G(1,3))\backslash \Delta_{\G(1,3)}  \nonumber \\
X & \mapsto & \{r,s\}  \virgola \nonumber
\end{eqnarray}
where $\Delta_{\G(1,3)}$ is the image of the diagonal under the quotient map $\G(1,3)\times \G(1,3)\rightarrow S^2\G(1,3)$ which sends the quartic $X$ to the set $\{r,s\}$ of lines contained in $X$. Such a morphism is surjective, because for any two lines in $\P^3$, one can construct a quartic containing those lines.

The set of quartic containing the two lines $l_1=\{w=x=0\}, l_2=\{y=z=0\}\subseteq \P^3$, i.e. the fibre of $\phi$ at the point $\{l_1,l_2\}$, is a projective space of dimension $24$.

Quartics $X_1,X_2$ in such set are isomorphic only if they are projectively equivalent.
Let $U(l_1,l_2)$ be the set of quartics in $\P^3$ containing the two lines $l_1,l_2$ and no further lines. Projectivities stabilizing $U(l_1,l_2)$ are exactly those which belong to the subgroup $G_0\subseteq \P\GL(4)$ of the projectivities which fix the two lines $l_1,l_2$ or to its coset $G_1$ of those which swaps the two lines. This defines an algebraic variety of dimension $7$.

Finally, let us call $\psi\colon U(l_1,l_2)\rightarrow \mathcal{K}_3$ the map sending any quartic in $U(l_1,l_2)$ into its isomorphism class in the moduli space of polarized $\Kt$ surfaces of genus $3$; since $\dim G = 7< \dim U(l_1,l_2)=24$, the map $\psi$ cannot be constant. More precisely $\psi(U(l_1,l_2))$ has dimension equal to $17$ and since the Picard rank of $X\in U(l_1,l_2)$ is $\rho(X)=3$, then $\psi$ is dominant onto an irreducible component of the moduli space of $\Kt$ surfaces whose Picard rank is at least $3$.

Consider two non-isomorphic smooth hyperplane sections $X_1,X_2$ of the Fano variety $Y$ constructed above such that their intersection is a smooth curve $C$. The surfaces lying in the one-dimensional linear system of hyperplane sections of $Y$ containing $C$, which coincide with the linear system of hyperplanes in $\P=\P H^0(-K_Y)$ containing a fixed two-codimensional linear subspace of $\P$, are mapped non-constantly in $\mathcal{K}_3\cap \mathcal{K}_{23}$. Thus this linear system yields a deformation of $\Kt$ surfaces inside the Brill-Noether locus $B_C^{13}(2,44)$.

This construction shows explicitely a curve $C$ where $\dim B_C^{13}(2,44)>2$.
\end{example}

\subsubsection{} The following statement summarizes the construction of the example:

\begin{corollary}
There exists a smooth curve $C$ of genus $g(C)=23$ whose Brill-Noether locus $B_C^{13}(2,44)$ has dimension greater than or equal to $3$.
\end{corollary}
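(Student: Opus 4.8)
The plan is to realize the claimed curve as the complete intersection $C = X_1 \cap X_2$ of two general smooth anticanonical sections of the Fano threefold $Y$ obtained by blowing up $\P^3$ along two skew lines, exactly as built in Example \ref{ex:curvehBN}. First I would verify the numerical data: since $-K_Y$ has $(-K_Y)^3 = 44$, a smooth surface $X \in |-K_Y|$ is a $\Kt$ surface and the restriction $H = (-K_Y)|_X$ polarizes it to genus $23$, so a general smooth $C \in |H|$ has $g(C) = 23 \equiv 3 \pmod 4$. This places us squarely in the case treated by Theorem \ref{thm:BN23} (with $s = 11$), which guarantees that for the Mukai vector $v = (2, H, 11)$ the restriction map $\psi\colon N = M_{X,H}(v) \to T = B_C^{13}(2,44)$ is well defined and \emph{injective}, and that $N$ is a $\Kt$ surface since $v^2 = H^2 - 4s = 44 - 44 = 0$.

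Next I would produce the positive-dimensional family of distinct $\Kt$ surfaces all containing $C$. The surfaces in $|-K_Y|$ are precisely the strict transforms of quartics in $\P^3$ through $l_1, l_2$; the hyperplanes in $\P H^0(-K_Y)$ containing the fixed codimension-two subspace cut out by $C$ form a pencil, so $C$ lies in a one-parameter family of anticanonical $\Kt$ surfaces. The essential arithmetic input is the dimension count carried out in the example: the space $U(l_1,l_2)$ of quartics through the two lines with no further lines has dimension $24$, while the group of projectivities preserving $\{l_1,l_2\}$ has dimension $7$, so the moduli map $\psi\colon U(l_1,l_2) \to \mathcal{K}_3$ is non-constant with $17$-dimensional image. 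Hence the generic such pencil through $C$ meets $\mathcal{K}_3 \cap \mathcal{K}_{23}$ in a genuinely varying, non-isotrivial family, and I can choose $X_1, X_2$ in it that are non-isomorphic.

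Finally I would assemble the contradiction argument. For each $X'$ in this family the associated moduli space $M_{X',H'}(v')$ is a $\Kt$ surface, and by Theorem \ref{thm:BN23} it injects into the single fixed Brill-Noether locus $T = B_C^{13}(2,44)$. Two such surfaces $X_1 \neq X_2$ cannot give isomorphic images, for otherwise $M_{X_1,H_1}(v_1) \cong M_{X_2,H_2}(v_2)$ would force $X_1, X_2$ to be Fourier-Mukai partners of a common surface, hence $X_1 \cong X_2$ (as recalled from \cite{ABS14, Fey17}). Therefore as $X'$ varies we obtain a non-trivial one-parameter family of distinct embedded $\Kt$ surfaces inside $T$, which is impossible if $\dim T \leq 2$; thus $\dim B_C^{13}(2,44) \geq 3$, as claimed.

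The main obstacle I anticipate is the genericity and transversality bookkeeping needed to guarantee simultaneously that $C$ is smooth of Clifford index at least $3$ (so that the surface-extension machinery of \cite{CDS18} and Theorem \ref{thm:BN23} applies), that the generic pencil through $C$ stays inside the locus $U(l_1,l_2)$ of quartics with exactly two lines, and that the corresponding family in $\mathcal{K}_3$ is honestly non-isotrivial rather than merely non-constant on the nose. In other words, the delicate point is not any single stability computation—these are supplied by Section \ref{sect:BNloc}—but the verification that a \emph{generic} choice of $(X_1, X_2, C)$ meets all these open conditions at once, which I would handle by the dimension comparison $\dim \psi(U(l_1,l_2)) = 17 > 0$ together with the finiteness of $\mathrm{Aut}(X)$ invoked via Corollary 2.4 in \cite{Huy15}.
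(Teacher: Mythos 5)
Your proposal is correct and follows essentially the same route as the paper's Example \ref{ex:curvehBN}: the same Fano threefold $Y$ (blow-up of $\P^3$ along two skew lines, $(-K_Y)^3=44$, $g=23\equiv 3\pmod 4$), the same pencil of anticanonical $\Kt$ surfaces through $C=X_1\cap X_2$ with the dimension count $\dim U(l_1,l_2)=24$ versus the $7$-dimensional projectivity group, and the same conclusion via the injectivity of Theorem \ref{thm:BN23} together with the Fourier--Mukai partner argument forcing distinct surfaces to have distinct images in $B_C^{13}(2,44)$. The genericity caveats you flag at the end are real but are treated at the same level of detail in the paper itself, so nothing essential is missing.
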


\begin{remark}
Following the construction in the previous example it is possible to exhibit various examples of Brill-Noether loci of unexpected dimensions on curves whose genus is not divisible by 4.
\end{remark}

\subsection*{Acknowledgements}

We thank Professor Enrico Arbarello, for having introduced me to this topic and for several interesting discussions. We thank Professor Fabien Pazuki for many interesting discussions.


\vfill

{\textsc{University of Copenhagen, department of Mathematical Sciences, \\
Universitetparken 5, 2100 Copenhagen, Denmark}} 

\emph{E-mail address}: luigi.pagano@math.ku.dk

\end{document}